\newdimen\paperwidth
\newdimen\paperheight
\def\papersize#1#2{\let\p@persize\relax\paperwidth#1\paperheight#2}
\def\Afour{\papersize{210truemm}{297truemm}}
\let\p@persize\Afour
\let\onesidestyle\@twosidefalse
\let\twosidestyle\@twosidetrue
\def\margins{\@ifnextchar[{\@margins}{\@margins[\z@]}}
\def\@margins[#1]#2#3{
  \p@persize\dimen0 #3\dimen0 .5\dimen0\normalsize%
  \oddsidemargin-1truein\advance\oddsidemargin#2%
  \evensidemargin-1truein\advance\evensidemargin#2%
  \topmargin-1truein\advance\topmargin\dimen0\headsep\dimen0\footskip\dimen0%
  \textwidth\paperwidth\advance\textwidth-#2\advance\textwidth-#2%
  \textheight\paperheight\advance\textheight-#3\advance\textheight-#3%
  \headheight\baselineskip\advance\topmargin-.5\baselineskip%
  \advance\headsep-.5\baselineskip%
  \footheight\baselineskip
  \advance\textwidth-#1\advance\oddsidemargin#1
  \if@twoside\def\@themargin%
    {\ifodd\count\z@\oddsidemargin\else\evensidemargin\fi}\fi}
\def\headlinesep#1{\advance\topmargin\headsep\advance\topmargin -#1
  \advance\topmargin.5\baselineskip\headsep #1\advance\headsep-.5\baselineskip}
\def\headline{\if@twoside\let\n@xt\h@dlin@\else\let\n@xt\h@@dlin@\fi\n@xt}
\def\h@dlin@#1#2{%
  \def\@oddhead{%
    {{\leftskip\z@\rightskip\z@\noindent\normalsize#1}}}
  \def\@evenhead{%
    {{\leftskip\z@\rightskip\z@\noindent\normalsize#2}}}}
\def\h@@dlin@#1{%
  \def\@oddhead{{{\leftskip\z@\rightskip\z@\noindent\normalsize#1}}}}
\def\footline{\if@twoside\let\n@xt\f@tlin@\else\let\n@xt\f@@tlin@\fi\n@xt}
\def\f@tlin@#1#2{%
  \def\@oddfoot{%
    {{\leftskip\z@\rightskip\z@\noindent\normalsize#1}}}
  \def\@evenfoot{%
    {{\leftskip\z@\rightskip\z@\noindent\normalsize#2}}}}
\def\f@@tlin@#1{%
  \def\@oddfoot{{{\leftskip\z@\rightskip\z@\noindent\normalsize#1}}}}
\def\normalpage{\global\@specialpagefalse}
\def\ft{\@ifnextchar[{\ft@s}{\ft@}}
\def\ft@{\ft@@@s[\f@size]}
\def\ft@s[{\@ifnextchar{a}{\ft@sz[}{\ft@@s[}}
\def\ft@@s[{\@ifnextchar{s}{\ft@sz[}{\ft@@@s[}}
\def\ft@@@s[#1]{\ft@sz[at #1pt]}
\def\ft@sz[#1]#2{\font\fonttemp=#2 #1\fonttemp\ignorespaces}
\def\@@bold{bold}
\def\widebar{\ifx\math@version\@@bold
  \let\@widebar\@@@widebar\else\let\@widebar\@@widebar\fi\@widebar}
\def\@@widebar#1{\text{\setbox15\hbox{$#1$}%
  \dimen15 0.45\wd15\advance\dimen15 0.15\ht15%
  \dimen16\ht15\advance\dimen16 0.00em\advance\dimen16 0.3ex%
  \dimen17 0.65\wd15\advance\dimen17 0.05\ht15\advance\dimen17 0.1ex%
  \dimen18 0.035em\advance\dimen18 0.00ex
  \put[\dimen15,\dimen16][c]{\vrule depth 0pt height \dimen18 width \dimen17}}#1}
\def\@@@widebar#1{\text{\setbox15\hbox{$#1$}%
  \dimen15 0.45\wd15\advance\dimen15 0.15\ht15%
  \dimen16\ht15\advance\dimen16 0.00em\advance\dimen16 0.26ex%
  \dimen17 0.65\wd15\advance\dimen17 0.05\ht15\advance\dimen17 0.1ex%
  \dimen18 0.05em\advance\dimen18 0.00ex
  \put[\dimen15,\dimen16][c]{\vrule depth 0pt height \dimen18 width \dimen17}}#1}
\def\smallsquare{\raise-.065em\hbox{$\Box$}}
\def\smallblacksquare{%
  \kern.3ex\vrule depth-.03ex height1.27ex width1.15ex \kern-1.45ex \smallsquare}
\def\smallcircc{\mathop{\mkern3.5mu\text{\raise.58ex\hbox{\ft{lcircle10}a}}}}
\def\varemptyset{{\text{\raise.21ex\hbox{$\not$}}\mkern.15mu\mathrm{O}\mkern.15mu}}
  \let\epsilon\varepsilon
      \let\theta\vartheta
          \let\phi\varphi
   \let\emptyset\varemptyset
\let\Larg@\Large
\let\hug@\huge
\def\usepackage#1{\input{#1.sty}}
\let\input\@input
\def\r@adlabel#1#2{\global\@namedef{#1@\the\@key}{#2}}
\let\Large\Larg@
\let\huge\hug@
\def\smallskip{\vskip\smallskipamount}
\def\medskip{\vskip\medskipamount}
\def\bigskip{\vskip\bigskipamount}
\def\mytrivlist{\parsep\parskip\@nmbrlistfalse
  \my@trivlist \labelwidth\z@ \leftmargin\z@
  \itemindent\z@ \def\makelabel##1{##1}}
\def\my@trivlist{\global\@newlisttrue \@outerparskip\parskip}
\def\end#1{\csname end#1\endcsname\@checkend{#1}%
  \expandafter\endgroup\if@endpe\@doendpe\fi
  \if@ignore \global\@ignorefalse \ignorespaces\fi}
\def\put{\@ifnextchar[{\@put}{\@@rput[\z@,\z@][r]}}
\def\@put[#1]{\@ifnextchar[{\@@put[#1]}{\@@@@@put[#1]}}
\def\@@put[#1][{\@ifnextchar{l}{\@@lput[#1][}{\@@@put[#1][}}
\def\@@@put[#1][{\@ifnextchar{c}{\@@cput[#1][}{\@@@@put[#1][}}
\def\@@@@put[#1][{\@ifnextchar{r}{\@@rput[#1][}{\relax}}
\def\@@@@@put[{\@ifnextchar{l}{\@@lput[\z@,\z@][}{\@@@@@@put[}}
\def\@@@@@@put[{\@ifnextchar{c}{\@@cput[\z@,\z@][}{\@@@@@@@put[}}
\def\@@@@@@@put[{\@ifnextchar{r}{\@@rput[\z@,\z@][}{\@@@@@@@@put[}}
\def\@@@@@@@@put[#1]{\@@rput[#1][r]}
\let\hm@d@\leavevmode
\long\def\@@lput[#1,#2][l]#3{\setbox0\hbox{#3}\hm@d@\raise#2\hbox to\z@{\dimen0 #1%
  \advance\dimen0-\wd0\kern\dimen0\dp0\z@\ht0\z@\wd0\z@\box0\hss}\ignorespaces}
\long\def\@@cput[#1,#2][c]#3{\setbox0\hbox{#3}\hm@d@\raise#2\hbox to\z@{\dimen0 #1%
  \advance\dimen0-.5\wd0\kern\dimen0\dp0\z@\ht0\z@\wd0\z@\box0\hss}\ignorespaces}
\long\def\@@rput[#1,#2][r]#3{\setbox0\hbox{\kern#1\raise#2\hbox{#3}}%
  \dp0\z@\ht0\z@\wd0\z@\hm@d@\box0\ignorespaces}
\def\flbox{\@ifnextchar[{\@flbox}{\@@rflbox[\z@,\z@][r]}}
\def\@flbox[#1]{\@ifnextchar[{\@@flbox[#1]}{\@@@@@flbox[#1]}}
\def\@@flbox[#1][{\@ifnextchar{l}{\@@lflbox[#1][}{\@@@flbox[#1][}}
\def\@@@flbox[#1][{\@ifnextchar{c}{\@@cflbox[#1][}{\@@@@flbox[#1][}}
\def\@@@@flbox[#1][{\@ifnextchar{r}{\@@rflbox[#1][}{\relax}}
\def\@@@@@flbox[{\@ifnextchar{l}{\@@lflbox[\z@,\z@][}{\@@@@@@flbox[}}
\def\@@@@@@flbox[{\@ifnextchar{c}{\@@cflbox[\z@,\z@][}{\@@@@@@@flbox[}}
\def\@@@@@@@flbox[{\@ifnextchar{r}{\@@rflbox[\z@,\z@][}{\@@@@@@@@flbox[}}
\def\@@@@@@@@flbox[#1]{\@@rflbox[#1][r]}
\long\def\@@lflbox[#1,#2][l]#3{\@@lput[#1,#2][l]{%
  \vtop{\leftskip\z@\parindent\z@\raggedleft\hm@d@#3}}}
\long\def\@@cflbox[#1,#2][c]#3{\@@cput[#1,#2][c]{%
  \vtop{\leftskip\z@\parindent\z@\raggedcenter\hm@d@#3}}}
\long\def\@@rflbox[#1,#2][r]#3{\@@rput[#1,#2][r]{%
  \vtop{\leftskip\z@\parindent\z@\raggedright\hm@d@#3}}}
\def\maketitle{\par
 \begingroup
 \def\thefootnote{\fnsymbol{footnote}}
 \def\@makefnmark{\hbox 
 to 0pt{$^{\@thefnmark}$\hss}} 
 \if@twocolumn 
 \twocolumn[\@maketitle] 
 \else 
 \global\@topnum\z@ \@maketitle \fi\thispagestyle{plain}\@thanks
 \endgroup
 \setcounter{footnote}{0}
 \let\maketitle\relax
 \let\@maketitle\relax
 \gdef\@thanks{}\gdef\@author{}\gdef\@title{}\let\thanks\relax}
\def\@maketitle{ 
 \null
 \vskip 2em \begin{center}
 {\LARGE \@title \par} \vskip 1.5em {\large \lineskip .5em
\begin{tabular}[t]{c}\@author 
 \end{tabular}\par} 
 \vskip 1em {\large \@date} \end{center}
 \par
 \vskip 1.5em}
\def\partbeforeskip#1{\def\p@rtbeforeskip{#1}}
\def\partstyle#1{\def\p@rtstyl@{#1}}
\def\partdot#1{\def\partd@t{#1}}
\def\partafterskip#1{\def\p@rtafterskip{#1}}
\def\partintrostyle#1{\def\partintr@styl@{#1}}
\def\partintrodot#1{\def\partintr@dot{#1}}
\long\def\partintrosep#1{\long\def\partintr@sep{#1}}
\def\partnewpagetrue{\def\p@rtnewp@ge{\newpage}}
\def\partnewpagefalse{\long\def\p@rtnewp@ge{\par}}
\def\partname{Part}
\def\part{\p@rtnewp@ge\addvspace\p@rtbeforeskip\@afterindentfalse\secdef\@part\@spart}
\def\@part[#1]#2{\ifnum \c@secnumdepth >-1\relax  
        \refstepcounter{part}                     
        \def\@tempa{\addcontentsline{toc}{part}}  %
        \expandafter\@tempa\expandafter{\thepart  
          \hspace{1em}#1}\else                    
        \addcontentsline{toc}{part}{#1}\fi        
   {\p@rtstyl@                       
    \ifnum \c@secnumdepth >-1\relax        
      {\partintr@styl@\partname\ \thepart  
       \partintr@dot}\partintr@sep\nobreak 
    \fi                                    
    #2\partd@t\markboth{}{}\par}
    \nobreak                       
    \vskip\p@rtafterskip           
   \@afterheading                  
    }                              
\def\@spart#1{{\p@rtcentering\p@rtstyl@                      
    #1\partd@t\par}                 
    \nobreak                        
    \vskip\p@rtafterskip            
    \@afterheading                  
  }                                 
\newif\ifsection@ftind
\newif\ifsection@ftpar
\def\sectionbeforeskip#1{\def\s@ctbeforeskip{#1}}
\def\sectionstyle#1{\def\s@ctstyl@{#1}}
\def\sectiondot#1{\def\sectiond@t{#1}}
\def\sectionafterskip#1{\def\s@ctafterskip{#1}}
\def\sectionintrostyle#1{\def\sectionintr@styl@{#1}}
\def\sectionintro#1{\def\sectionintr@{#1}}
\def\sectionintrodot#1{\def\sectionintr@dot{#1}}
\def\sectionintrosep#1{\def\sectionintr@sep{#1}}
\def\sectionindenttrue{\def\s@ctind{\parindent}}
\def\sectionindentfalse{\def\s@ctind{\z@}}
\def\sectionafterindenttrue{\section@ftindtrue}
\def\sectionafterindentfalse{\section@ftindfalse}
\def\sectionafternewlinetrue{\section@ftpartrue}
\def\sectionafternewlinefalse{\section@ftparfalse}
\newif\ifsubsection@ftind
\newif\ifsubsection@ftpar
\def\subsectionbeforeskip#1{\def\ss@ctbeforeskip{#1}}
\def\subsectionstyle#1{\def\ss@ctstyl@{#1}}
\def\subsectiondot#1{\def\subsectiond@t{#1}}
\def\subsectionafterskip#1{\def\ss@ctafterskip{#1}}
\def\subsectionintrostyle#1{\def\subsectionintr@styl@{#1}}
\def\subsectionintro#1{\def\subsectionintr@{#1}}
\def\subsectionintrodot#1{\def\subsectionintr@dot{#1}}
\def\subsectionintrosep#1{\def\subsectionintr@sep{#1}}
\def\subsectionindenttrue{\def\ss@ctind{\parindent}}
\def\subsectionindentfalse{\def\ss@ctind{\z@}}
\def\subsectionafterindenttrue{\subsection@ftindtrue}
\def\subsectionafterindentfalse{\subsection@ftindfalse}
\def\subsectionafternewlinetrue{\subsection@ftpartrue}
\def\subsectionafternewlinefalse{\subsection@ftparfalse}
\newif\ifsubsubsection@ftind
\newif\ifsubsubsection@ftpar
\def\subsubsectionbeforeskip#1{\def\sss@ctbeforeskip{#1}}
\def\subsubsectionstyle#1{\def\sss@ctstyl@{#1}}
\def\subsubsectiondot#1{\def\subsubsectiond@t{#1}}
\def\subsubsectionafterskip#1{\def\sss@ctafterskip{#1}}
\def\subsubsectionintrostyle#1{\def\subsubsectionintr@styl@{#1}}
\def\subsubsectionintro#1{\def\subsubsectionintr@{#1}}
\def\subsubsectionintrodot#1{\def\subsubsectionintr@dot{#1}}
\def\subsubsectionintrosep#1{\def\subsubsectionintr@sep{#1}}
\def\subsubsectionindenttrue{\def\sss@ctind{\parindent}}
\def\subsubsectionindentfalse{\def\sss@ctind{\z@}}
\def\subsubsectionafterindenttrue{\subsubsection@ftindtrue}
\def\subsubsectionafterindentfalse{\subsubsection@ftindfalse}
\def\subsubsectionafternewlinetrue{\subsubsection@ftpartrue}
\def\subsubsectionafternewlinefalse{\subsubsection@ftparfalse}
\newif\ifparagraph@ftind
\newif\ifparagraph@ftpar
\def\paragraphbeforeskip#1{\def\p@rbeforeskip{#1}}
\def\paragraphstyle#1{\def\p@rstyl@{#1}}
\def\paragraphdot#1{\def\paragraphd@t{#1}}
\def\paragraphafterskip#1{\def\p@rafterskip{#1}}
\def\paragraphintrostyle#1{\def\paragraphintr@styl@{#1}}
\def\paragraphintro#1{\def\paragraphintr@{#1}}
\def\paragraphintrodot#1{\def\paragraphintr@dot{#1}}
\def\paragraphintrosep#1{\def\paragraphintr@sep{#1}}
\def\paragraphindenttrue{\def\p@rind{\parindent}}
\def\paragraphindentfalse{\def\p@rind{\z@}}
\def\paragraphafterindenttrue{\paragraph@ftindtrue}
\def\paragraphafterindentfalse{\paragraph@ftindfalse}
\def\paragraphafternewlinetrue{\paragraph@ftpartrue}
\def\paragraphafternewlinefalse{\paragraph@ftparfalse}
\newif\ifsubparagraph@ftind
\newif\ifsubparagraph@ftpar
\def\subparagraphbeforeskip#1{\def\sp@rbeforeskip{#1}}
\def\subparagraphstyle#1{\def\sp@rstyl@{#1}}
\def\subparagraphdot#1{\def\subparagraphd@t{#1}}
\def\subparagraphafterskip#1{\def\sp@rafterskip{#1}}
\def\subparagraphintrostyle#1{\def\subparagraphintr@styl@{#1}}
\def\subparagraphintro#1{\def\subparagraphintr@{#1}}
\def\subparagraphintrodot#1{\def\subparagraphintr@dot{#1}}
\def\subparagraphintrosep#1{\def\subparagraphintr@sep{#1}}
\def\subparagraphindenttrue{\def\sp@rind{\parindent}}
\def\subparagraphindentfalse{\def\sp@rind{\z@}}
\def\subparagraphafterindenttrue{\subparagraph@ftindtrue}
\def\subparagraphafterindentfalse{\subparagraph@ftindfalse}
\def\subparagraphafternewlinetrue{\subparagraph@ftpartrue}
\def\subparagraphafternewlinefalse{\subparagraph@ftparfalse}
\let\@partoken\par
\long\def\@@gobble#1{}
\def\ignorepar{\@ifnextchar\@partoken{\expandafter\ignorepar\@@gobble}{\ignorespaces}}
\def\@startsection#1#2#3#4#5#6{
   \@tempskipa #4\relax
   \csname if#1@ftind\endcsname\@afterindenttrue\else\@afterindentfalse\fi
   \advance\@tempskipa by\presection
   \if@nobreak \everypar{}\else
     \addpenalty{\@secpenalty}\addvspace{\@tempskipa}%
     \allowbreak\vskip -\presection \fi \@ifstar
     {\@ssect{#1}{#2}{#3}{#4}{#5}{#6}}{\@dblarg{\@sect{#1}{#2}{#3}{#4}{#5}{#6}}}}
\def\@sect#1#2#3#4#5#6[#7]#8{\def\object@type{#1}%
   \ifnum #2>\c@secnumdepth\def\@svsec{}\def\@tempb{}%
      \else\refstepcounter{#1}\def\@svsec{{\csname #1intr@styl@\endcsname%
        {\csname #1intr@\endcsname}\csname the#1\endcsname%
        \csname #1intr@dot\endcsname\kern\csname #1intr@sep\endcsname}}%
        \edef\@tempb{\noexpand\numberline{\csname the#1\endcsname}}\fi%
   \def\@tempa{\addcontentsline{toc}{#1}}%
   \csname if#1@ftpar\endcsname%
      \begingroup #6\relax%
        \@hangfrom{\hskip #3\relax\@svsec}{\interlinepenalty \@M{#8}%
        \csname #1d@t\endcsname\par}%
      \endgroup%
      \csname #1mark\endcsname{#7}%
      \expandafter\@tempa\expandafter{\@tempb #7}%
      \ifautolabel\label*{#8}\fi%
   \else%
      \def\@svsechd{#6\hskip #3\relax%
         \@svsec{#8}%
         \csname #1d@t\endcsname%
         \csname #1mark\endcsname{#7}%
         \expandafter\@tempa\expandafter{\@tempb #7}%
         \ifautolabel\label*{#8}\fi}\fi%
   \@xsect{#1}{#5}\ignorepar}
\def\@ssect#1#2#3#4#5#6#7{%
   \ifnum #2>\c@secnumdepth\def\@tempb{}\else \def\@tempb{\numberline{}}\fi%
     \def\@tempa{\addcontentsline{toc}{s#1}}%
     \csname if#1@ftpar\endcsname
        \begingroup #6\relax
           \@hangfrom{\hskip #3}{\interlinepenalty \@M{#7}%
           \csname #1d@t\endcsname\par}%
        \endgroup
        \csname s#1mark\endcsname{#7}%
        \ifstarredcontents\expandafter\@tempa\expandafter{\@tempb #7}\fi%
        \ifautolabel\label*{#7}\fi%
     \else%
        \def\@svsechd{#6\hskip #3\relax{#7}%
        \csname #1d@t\endcsname%
        \csname s#1mark\endcsname{#7}%
        \ifautolabel\label*{#7}\fi}\fi
   \@xsect{#1}{#5}\ignorepar}
\def\@xsect#1#2{
   \csname if#1@ftpar\endcsname 
       \par \nobreak \vskip #2\relax \@afterheading
    \else \global\@nobreakfalse \global\@noskipsectrue
       \everypar{\if@noskipsec \global\@noskipsecfalse
                   \clubpenalty\@M \hskip -\parindent
                   \begingroup \@svsechd \endgroup \unskip
                   \hskip #2\relax  
                  \else \clubpenalty \@clubpenalty
                    \everypar{}\fi}\fi\ignorespaces}
\def\section{\@startsection{section}{1}{\s@ctind}
  {\s@ctbeforeskip}{\s@ctafterskip}{\s@ctstyl@}}
\def\subsection{\@startsection{subsection}{2}{\ss@ctind}
  {\ss@ctbeforeskip}{\ss@ctafterskip}{\ss@ctstyl@}}
\def\subsubsection{\@startsection{subsubsection}{3}{\sss@ctind}
  {\sss@ctbeforeskip}{\sss@ctafterskip}{\sss@ctstyl@}}
\def\paragraph{\@startsection{paragraph}{4}{\p@rind}
  {\p@rbeforeskip}{\p@rafterskip}{\p@rstyl@}}
\def\subparagraph{\@startsection{subparagraph}{4}{\sp@rind}
  {\sp@rbeforeskip}{\sp@rafterskip}{\sp@rstyl@}}
\def\statementabove#1{\def\th@bove{#1}}
\def\statementstyle#1{\def\thstyl@{#1}}
\def\statementbelow#1{\def\thb@low{#1}}
\def\statementindentfalse{\let\thind@nt\relax}
\def\statementindenttrue{\let\thind@nt\indent}
\def\statementintrostyle#1{\def\thintr@style{#1}}
\def\statementintrodot#1{\def\thintr@dot{#1}}
\def\statementintrosep#1{\def\thintr@sep{#1}}
\def\statementintrobrackets#1#2{\def\thintr@left{#1}\def\thintr@right{#2}}
\def\@thskip{\dimen100\lastskip\vskip-\dimen100%
  \th@bove\dimen101\lastskip\vskip-\dimen101%
  \ifdim\dimen100>\dimen101\else\dimen100\dimen101\fi\vskip\dimen100\vskip0pt}
\long\def\@@newtheorem#1#2#3{%
  \newenvironment{#3}%
    {\def\object@type{#3}\par\@thskip%
     \@ifnextchar[{\@enva{#3}{\thstyl@#1{#2}}}{\@envb{#3}{\thstyl@#1{#2}}}}%
    {\end{#3@}}%
  \@ifnextchar[{\@nothm{#3}}{\@nnthm{#3}}}
\def\@nothm#1[#2]#3{%
  \@ifundefined{c@#2}{\@latexerr{No theorem environment `#2' defined}\@eha}%
  {\expandafter\@ifdefinable\csname #1@\endcsname
  {\global\@namedef{the#1}{\@nameuse{the#2}}%
   \global\@namedef{c@#1}{\@nameuse{c@#2}}
   \global\@namedef{p@#1}{\@nameuse{p@#2}}
   \global\@namedef{#1@}{\@nnnthm{#2}{#3}}%
   \global\@namedef{end#1@}{\@endtheorem}}}}
\def\@nnnthm#1#2{\refstepcounter
    {#1}\@ifnextchar[{\@ynnnthm{#1}{#2}}{\@xnnnthm{#1}{#2}}}
\def\@xnnnthm#1#2{\@begintheorem{#2}{\csname the#1\endcsname}\ignorespaces}
\def\@ynnnthm#1#2[#3]{\@opargbegintheorem{#2}{\csname the#1\endcsname}{#3}\ignorespaces}
\def\renewtheorem{\@ifnextchar[{\@renewtheorem}{\@renewtheorem[{}{}]}}
\long\def\@renewtheorem[#1]{\@@renewtheorem#1}
\long\def\@@renewtheorem#1#2#3{%
  \expandafter\let\csname#3@\endcsname\undefined
  \renewenvironment{#3}%
    {\def\object@type{#3}\par\@thskip%
     \@ifnextchar[{\@enva{#3}{\thstyl@#1{#2}}}{\@envb{#3}{\thstyl@#1{#2}}}}%
    {\end{#3@}}%
  \@ifnextchar[{\@nothm{#3}}{\@nnthm{#3}}}
\def\@begintheorem#1#2{\@opargbegintheorem{#1}{#2}{}}
\def\@opargbegintheorem#1#2#3{%
        \edef\@tempx{#1}%
        \expandafter\let\expandafter\@tempy#2
        \def\@tempz{#3}%
        \mytrivlist\item[\thind@nt\hskip\labelsep%
        {\thintr@style%
          #1\ifx\@tempx\@empty\else\ifx\@tempy\relax\else\kern1ex\fi\fi#2%
          \ifx\@tempz\@empty%
            \ifx\@tempx\@empty\ifx\@tempy\relax%
            \else\thintr@dot\thintr@sep\fi\else\thintr@dot\thintr@sep\fi%
            \else%
            \ifx\@tempx\@empty\ifx\@tempy\relax\else\kern1ex\fi\else\kern1ex\fi%
           \thintr@left{#3}\thintr@right\thintr@dot\thintr@sep\fi}%
            \hskip-\labelsep]%
        \ifautolabel\label*{#3}\fi}
\def\@endtheorem{\endtrivlist\thb@low}
\def\proofname{Proof}
\def\proofabove#1{\def\pf@bove{#1}}
\def\proofstyle#1{\def\pfstyl@{#1}}
\def\proofbelow#1{\def\pfb@low{#1}}
\def\proofindentfalse{\let\pfind@nt\relax}
\def\proofindenttrue{\let\pfind@nt\indent}
\def\proofintrostyle#1{\def\pfintr@style{#1}}
\def\proofintrodot#1{\def\pfintr@dot{#1}}
\def\proofintrosep#1{\def\pfintr@sep{#1}}
\def\proofintrobrackets#1#2{\def\pfintr@left{#1}\def\pfintr@right{#2}}
\def\@pfskip{\dimen100\lastskip\vskip-\dimen100%
  \pf@bove\dimen101\lastskip\vskip-\dimen101%
  \ifdim\dimen100>\dimen101\else\dimen100\dimen101\fi\vskip\dimen100\vskip0pt}
\renewenvironment{proof}%
  {\@pfskip\mytrivlist\item[\pfind@nt]\@ifnextchar[{\pro@f}{\pro@f[\prooftag]}}
  {\ifvoid\provedbox\else\hproved\fi\endtrivlist\pfb@low}
\def\pro@f[#1]{\setbox\provedbox\hbox{\provedboxcontents{#1}}\proofintro{#1}}
\def\proofintro#1{\expandafter\def\expandafter\@tempa\expandafter{#1}%
  {\pfintr@style{\proofname\ifx\@tempa\empty\else\kern1ex\pfintr@left{#1}%
  \pfintr@right\fi}\pfintr@dot\pfintr@sep}\pfstyl@\ignorespaces}
\def\provedmark#1{\def\prm@rk{#1}}
\def\provedsep#1{\def\prs@p{#1}}
\def\provedtexttrue{\def\prb@x##1{\fbox{\small##1}}}
\def\provedtextfalse{\def\prb@x##1{\prm@rk}}
\def\provedmarkrighttrue{\let\prhf@l\hfill}
\def\provedmarkrightfalse{\let\prhf@l\relax}
\def\provedboxcontents#1{\expandafter\def\expandafter\@tempa\expandafter{#1}%
  \ifx\@tempa\empty\prm@rk\else\prb@x{#1}\fi}
\def\proved{\ifmmode\eqno{\box\provedbox}\else\hproved\fi}
\def\hproved{\unskip\nobreak\prhf@l\penalty50\prs@p\hbox{}\nobreak\prhf@l
  \box\provedbox{\finalhyphendemerits=0\par}}
\def\captionstyle#1{\def\c@ptstyl@{#1}}
\def\captionintrostyle#1{\def\c@pintr@style{#1}}
\def\captionintrodot#1{\def\c@pintr@dot{#1}}
\def\captionintrosep#1{\def\c@pintr@sep{#1}}
\long\def\@makecaption#1#2{%
    \vskip\captionskip
    \setbox\@tempboxa\hbox{%
      \ifproofing\@ifundefined{the@label}{}
        {\hbox to 0pt{\vbox to 0pt{\vss\hbox{\tiny\the@label}\bigskip}\hss}}\fi
      \c@ptstyl@{\c@pintr@style #1\c@pintr@dot}\ignorespaces #2}%
    \@captionwidth=\hsize \advance\@captionwidth-2\@captionmargin
    \ifdim \wd\@tempboxa >\@captionwidth {%
        \rightskip=\@captionmargin\leftskip=\@captionmargin
        \unhbox\@tempboxa\par}%
      \else
        \hbox to\hsize{\hfil\box\@tempboxa\hfil}%
    \fi}
\def\end@Float#1{%
  \expandafter\caption\expandafter[\the@title]{%
   {\c@pintr@style%
   \ifx\the@caption\empty\ifx\the@title\empty
   \else\c@pintr@sep\fi\else\c@pintr@sep\fi
    \the@title\ifx\the@caption\empty%
     \expandafter\label\expandafter*\expandafter{\the@label}%
    \else\ifx\the@title\empty%
     \expandafter\label\expandafter*\expandafter{\the@label}%
    \else\c@pintr@dot\c@pintr@sep%
     \expandafter\label\expandafter*\expandafter{\the@label}\fi\fi}%
   \ignorespaces\the@caption}%
  \end{#1}}
\def\@myFloat#1[#2]#3{%
  \def\color@hbox{}\def\color@vbox{}\def\color@endbox{}%
  \begin{#1}[#2]\def\the@label{#3}}
\def\fig#1{\@ifnextchar[{\@fig{#1}}{\@fig{#1}[0pt]}}
\def\@fig#1[#2]#3{\@ifnextchar[{\@@fig{#1}[#2]{#3}}{\@@fig{#1}[#2]{#3}[0pt]}}
\def\@@fig#1[#2]#3[#4]#5#6{%
  \def\the@title{#5}\def\the@caption{#6}\centerline{\fig@{#1}{#2}{#3}}\vskip#4}
\def\fig@@#1#2#3{\leavevmode{\figstyl@\vrule width 0pt height 1.8ex%
 \smash{\framebox{\strut\def\@temp{#1}\ifx\@temp\@empty{ #3 }\else{ #1 }\fi}}}}
\def\fig@@@#1#2#3{\leavevmode\kern#2\epsfbox{#3}}
\def\figstyle#1{\def\figstyl@{#1}}
\newcounter{diagram}
\let\thediagram\theequation
\def\ftype@diagram{2}
\def\ext@diagram{lod}
\def\diagram{\@float{diagram}}
\let\enddiagram\end@float
\newif\if@diagnum
\def\diag#1{\@ifnextchar[{\@diag{#1}}{\@diag{#1}[0pt]}}
\def\@diag#1[#2]#3{\@ifnextchar[{\@@diag{#1}[#2]{#3}}{\@@diag{#1}[#2]{#3}[0pt]}}
\def\@@diag#1[#2]#3[#4]#5{
  \def\the@tag{#5}\@eqnswtrue%
  \centerline{\setbox0\hbox{\diag@{#1}{#2}{#3}}
  \dimen0 -0.5\wd0\dimen1 0.5\ht0\box0%
  \advance\dimen0 0.5\hsize\advance\dimen0 -\rightskip\advance\dimen1 #4%
  \let\@currentlabel\the@tag%
  \setbox0\hbox to 0pt{\hss%
    \fontfamily{cmr}\fontshape{n}\fontseries{m}\selectfont(\the@tag)}%
  \ifx\the@tag\@empty\refstepcounter{equation}\let\@currentlabel\theequation%
    \setbox0\hbox to 0pt{\hss%
      \fontfamily{cmr}\fontshape{n}\fontseries{m}\selectfont(\thediagram)}\fi%
  \if@eqnsw\else\let\@currentlabel\relax\setbox0\hbox to 0pt{}\fi%
  \advance\dimen1 -0.5\ht0%
  \put[\dimen0,\dimen1][l]{%
    \box0\expandafter\label\expandafter*\expandafter{\the@label}\kern0.15em}}}
\def\diag@@#1#2#3{\leavevmode{\diagstyl@\vrule width 0pt height 1.8ex%
 \smash{\framebox{\strut\def\@temp{#1}\ifx\@temp\@empty{ #3 }\else{ #1 }\fi}}}}
\def\diag@@@#1#2#3{\leavevmode\kern#2\epsfbox{#3}}
\def\diagstyle#1{\def\diagstyl@{#1}}
\def\showfiguresfalse{\let\fig@\fig@@}
\def\showfigurestrue{\let\fig@\fig@@@}
\def\showdiagramsfalse{\let\diag@\diag@@}
\def\showdiagramstrue{\let\diag@\diag@@@}
\def\n@number{\@eqnswfalse\let\@currentlabel\relax\let\the@tag\relax}
\def\equation{$$
  \@eqnswtrue\def\object@type{equation}\let\nonumber\n@number%
  \advance\c@equation1\edef\@currentlabel{\theequation}\advance\c@equation-1%
  \def\the@tag{\refstepcounter{equation}\eqno\hbox{\@eqnnum}}}
\def\tag#1{\edef\@currentlabel{#1}\def\the@tag{\eqno\hbox{\reset@font\rm(#1)}}}
\def\endequation{\the@tag$$
  \global\@ignoretrue}
\let\it@m\item
\def\item{\@ifnextchar[{\item@}{\item@@}}
\def\item@[#1]{\it@m[#1]\vskip-\lastskip\vskip\itemsep}
\def\item@@{\it@m\vskip-\lastskip\vskip\itemsep}
\def\s@titemsep{\@ifnextchar[{\s@@titemsep}{\relax}}
\def\s@@titemsep[#1]{\itemsep#1}
\let\@itemize\itemize
\let\@enditemize\enditemize
\renewenvironment{itemize}
{\@itemize\itemsep3pt\parsep0pt\topsep0pt\partopsep0pt\s@titemsep}
{\@enditemize\vskip-\lastskip\vskip\itemsep}
\let\@enumerate\enumerate
\let\@endenumerate\endenumerate
\let\@description\description
\let\@enddescription\enddescription
\def\thebibliography#1{%
 \section*{\refname}\vskip-\lastskip%
 \list{[\arabic{bibenumi}]}{\topsep0pt\settowidth\labelwidth{[#1]}%
 \leftmargin\labelwidth\advance\leftmargin\labelsep\usecounter{bibenumi}}%
 \def\newblock{\hskip .11em plus .33em minus .07em}%
 \sloppy\clubpenalty4000\widowpenalty4000\sfcode`\.=1000\relax}
\let\@ref@\ref
\let\@pageref@\pageref
\let\@fullref@\fullref
\let\@Fullref@\Fullref
\let\@reftype@\reftype
\let\@Reftype@\Reftype
\let\@label@\label
\let\@cite@\cite
\let\@bibitem@\bibitem
\def\label{\@ifnextchar*{\label@}{\label@{}}}
\def\label@#1#2{\@label@#1{#2}\putl@bel{#2}\ignorespaces}
\def\putl@b@l#1{\put[0pt,.25\baselineskip]{%
  \hbox{\labc@lor{\fontfamily{cmr}\fontshape{n}\fontseries{m}\selectfont%
  \tiny\setbox5\hbox{\vphantom{X}\smash{\ns#1}}%
  \hbox to 0pt{\hss\tiny$\blacktriangledown$\kern-.085em}%
  \raise2.25ex\hbox to 0pt{\hss\framebox{\box5}}}}}}
\def\putr@fl@bel#1{{\let\labc@lor\refc@lor\putl@bel{#1}}}
\def\ref@#1{\@ref@{#1}\putr@fl@bel{#1}}
\def\pageref@#1{\@pageref@{#1}\putr@fl@bel{#1}}
\def\fullref@#1{\@fullref@{#1}\putr@fl@bel{#1}}
\def\Fullref@#1{\@Fullref@{#1}\putr@fl@bel{#1}}
\def\reftype@#1{\@reftype@{#1}\putr@fl@bel{#1}}
\def\Reftype@#1{\@Reftype@{#1}\putr@fl@bel{#1}}
\def\ref@@#1{\leavevmode\refc@lor{\rm$\langle$#1$\rangle$}}
\let\pageref@@\ref@@
\let\Fullref@@\ref@@
\let\fullref@@\ref@@
\let\reftype@@\ref@@
\let\Reftype@@\ref@@
\def\bibitem{\@ifnextchar[{\bibitem@@}{\bibitem@@@}}
\def\bibitem@@[#1]#2{\@bibitem@[#1]{#2}\putl@bel{#2}}
\def\bibitem@@@#1{\@bibitem@{#1}\putl@bel{#1}\ignorespaces}
\def\cit@{\@ifnextchar[{\@cit@@@}{\@cit@@}}
\def\@cit@@#1{\@cite@{#1}{\let\labc@lor\citc@lor\putl@bel{#1}}}
\def\@cit@@@[#1]#2{\@cite@[#1]{#2}{\let\labc@lor\citc@lor\putl@bel{#2}}}
\def\cit@@{\@ifnextchar[{\cit@@@@}{\cit@@@}}
\def\cit@@@#1{\leavevmode{\citc@lor\rm[#1]}}
\def\cit@@@@[#1]#2{\leavevmode{\citc@lor\rm[#2, #1]}}
\def\showcitationstrue{\let\cite\cit@}
\def\showcitationsfalse{\let\cite\cit@@}
\def\showreferencestrue{%
  \let\ref\ref@\let\pageref\pageref@%
  \let\fullref\fullref@\let\Fullref\Fullref@%
  \let\reftype\reftype@\let\Reftype\Reftype@}
\def\showreferencesfalse{%
  \let\ref\ref@@\let\pageref\pageref@@%
  \let\fullref\fullref@@\let\Fullref\Fullref@@%
  \let\reftype\reftype@@\let\Reftype\Reftype@@}
\def\showlabelstrue{\let\putl@bel\putl@b@l}
\def\showlabelsfalse{\let\putl@bel\hid@@}
\def\postit@{\@ifnextchar[{\postit@@}{\p@tp@stit}}
\def\postit@@[#1]{\postit@@@#1,@}
\def\postit@@@#1,{\@ifnextchar{@}{\p@@tp@stit{#1}}{\postit@@@@#1,}}
\def\postit@@@@#1,#2,@{\p@@@tp@stit{#1}{#2}}
\long\def\p@tp@stit#1{\put[0pt,.25\baselineskip]{%
  \hbox{\postitc@lor{\fontfamily{cmr}\fontshape{n}\fontseries{m}\selectfont%
  \tiny\setbox5\hbox{\vphantom{X}\smash{\ns#1}}%
  \hbox to 0pt{\hss\tiny$\blacktriangledown$\kern-.085em}%
  \raise2.25ex\hbox to 0pt{\hss\framebox{\box5}}}}}}
\long\def\p@@tp@stit#1@#2{\put[0pt,.25\baselineskip]{%
  \hbox{\postitc@lor{\fontfamily{cmr}\fontshape{n}\fontseries{m}\selectfont%
  \tiny\setbox5\hbox{\vbox{\hsize#1\leftskip\z@\raggedright
  \parindent\z@{\ns#2\par}\vss}}%
  \hbox to 0pt{\hss\tiny$\blacktriangledown$\kern-.085em}%
  \raise2.25ex\hbox to 0pt{\hss\framebox{\box5}}}}}}
\long\def\p@@@tp@stit#1#2#3{\put[0pt,.25\baselineskip]{%
  \hbox{\postitc@lor{\fontfamily{cmr}\fontshape{n}\fontseries{m}\selectfont%
  \tiny\setbox5\hbox{\vbox to #2{\hsize#1\leftskip\z@\raggedright
  \parindent\z@{\ns#3\par}\vss}}%
  \hbox to 0pt{\hss\tiny$\blacktriangledown$\kern-.085em}%
  \raise2.25ex\hbox to 0pt{\hss\framebox{\box5}}}}}}
\def\postitc@lor{\color{postitcolor}}
\def\showpostittrue{\let\postit\postit@}
\def\showpostitfalse{\let\postit\hid@@@}
\long\def\hid@@#1{\ignorespaces}
\def\hid@@@{\@ifnextchar[{\hid@@@@}{\hid@@}}
\long\def\hid@@@@[#1]{\hid@@}
\newtheorem{stat}{\statname}  \unnumbered{stat}
\newenvironment{statement}[1]{\def\statname{#1}\begin{stat}}{\end{stat}}
\newtheorem{nstat}{\nstatname}[section]
\newtheorem[{\ns}{}]{definition}[nstat]{Definition}
\newtheorem{lemma}[nstat]{Lemma}
\newtheorem{proposition}[nstat]{Proposition}
\newtheorem{theorem}[nstat]{Theorem}
\newtheorem{corollary}[nstat]{Corollary}
\newtheorem{question}[nstat]{Question}
\newtheorem[{\ns}{}]{exercise}[nstat]{Exercise}
\newtheorem[{\ns}{}]{example}[nstat]{Example}
\newtheorem[{\ns}{}]{remark}[nstat]{Remark}
\let\ns\normalshape
\def\@@bold{bold}
\def\widebar{\ifx\math@version\@@bold
  \let\@widebar\@@@widebar\else\let\@widebar\@@widebar\fi\@widebar}
\def\@@widebar#1{\text{\setbox15\hbox{$#1$}%
  \dimen15 0.45\wd15\advance\dimen15 0.15\ht15%
  \dimen16\ht15\advance\dimen16 0.00em\advance\dimen16 0.3ex%
  \dimen17 0.65\wd15\advance\dimen17 0.05\ht15\advance\dimen17 0.1ex%
  \dimen18 0.035em\advance\dimen18 0.00ex
  \put[\dimen15,\dimen16][c]{\vrule depth 0pt height \dimen18 width \dimen17}}#1}
\def\@@@widebar#1{\text{\setbox15\hbox{$#1$}%
  \dimen15 0.45\wd15\advance\dimen15 0.15\ht15%
  \dimen16\ht15\advance\dimen16 0.00em\advance\dimen16 0.26ex%
  \dimen17 0.65\wd15\advance\dimen17 0.05\ht15\advance\dimen17 0.1ex%
  \dimen18 0.05em\advance\dimen18 0.00ex
  \put[\dimen15,\dimen16][c]{\vrule depth 0pt height \dimen18 width \dimen17}}#1}
\newcommand{\id}{\mathop{\mathrm{id}}\nolimits}
\newcommand{\Cl}{\mathop{\mathrm{Cl}}\nolimits} 
\newcommand{\Int}{\mathop{\mathrm{Int}}\nolimits} 
\newcommand{\Bd}{\partial} 
\newcommand{\cs}{\mathop{\#}}
\renewcommand{\:}{\,{:}\;}
\newcommand{\simtimes}{\mathbin{\widetilde{\smash{\times}}}}
\newcommand{\CP}{{C\mkern-1.5muP}}
\newcommand{\CPbar}{{\vphantom{CP}\smash{\widebar{C\mkern-1.5muP}}}}
\newcommand{\Bbar}{{\vphantom{B}\smash{\widebar{B}}}}
\newcommand{\Tor}{\mathop{\mathrm{Tor}}\nolimits}
\newcommand{\F}{{\mathbb F}}
\newcommand{\Z}{{\mathbb Z}}
\newcommand{\R}{{\mathbb R}}
\def\(#1\){$(${\sl #1}\/$)$}
\def\varemptyset{{\text{\raise.21ex\hbox{$\not$}}\mkern.15mu\mathrm{O}\mkern.15mu}}
\let\emptyset\varemptyset
\def\emph#1{{\sl #1}\/}
\begin{document}

\title{\large\bf BRANCHED COVERINGS OF $\CP^2$\\%
                 AND OTHER BASIC 4-MANIFOLDS}
\author{
\normalsize \sc Riccardo Piergallini\\
\normalsize \sl Scuola di Scienze e Tecnologie\\[-3pt]
\normalsize \sl Universit\`a di Camerino -- Italy\\
\small \tt riccardo.piergallini@unicam.it
\and
\normalsize \sc Daniele Zuddas\\
\normalsize \sl Dipartimento di Matematica e Geoscienze\\[-3pt]
\normalsize \sl Universit\`a di Trieste -- Italy\\
\small \tt dzuddas@units.it
}
\date{}

\vglue-0pt
\maketitle
\vskip-5pt\vskip0pt

\begin{abstract}\noindent
We give necessary and sufficient conditions for a 4-manifold to be a branched covering of
$\CP^2$, $S^2\times S^2$, $S^2 \simtimes S^2$ or $S^3 \times S^1$, which are expressed in
terms of the Betti numbers and the signature of the 4-manifold. Moreover, we extend these
results to include branched coverings of connected sums of the above manifolds. This leads
to some new examples of closed simply connected quasiregularly elliptic 4-manifolds.
\smallskip

\medskip\smallskip\noindent
{\sl Keywords}\/: branched covering, 4-manifold, 2-knot, surface knot.

\medskip\noindent
{\sl AMS Classification}\/: 57M12 (primary), 57K40, 57K45 (secondary).
\end{abstract}

\section{Introduction}

In \cite{Pi95} the first author proved that any closed orientable PL 4-manifold $M$ is a
simple\break 4-fold covering of $S^4$ branched over a closed locally flat PL surface
self-transversally\break immersed in $S^4$. Subsequently, in \cite{IP02} the self-intersections
of the branch surface were shown to be removable once the covering has been stabilized to
degree five, obtaining $M$ as a 5-fold covering of $S^4$ branched over a closed locally
flat PL surface embedded in $S^4$.

On the other hand, it is a classical fact of algebraic geometry that any smooth
irreducible projective surface $S \subset \CP^n$ is a holomorphic branched covering of
$\CP^2$ obtained by taking a general projection, where the branch set is an irreducible
nodal cuspidal algebraic curve in $\CP^2$. Even though this result is folklore, a proof
appeared surprisingly only in a 2011 paper by Ciliberto and Flamini \cite{CF11}.

Furthermore, Auroux in \cite{Au00} extended this result to all closed integral symplectic\break
4-manifolds $M$, proving that, roughly, they are realizable as ``symplectic'' coverings of
$\CP^2$ branched over a symplectic nodal cuspidal surface in $\CP^2$. In fact, every
closed integral symplectic 4-manifold $(M,\omega)$ admits a branched covering $M \to
\CP^2$, such that the pullback of the Fubini-Study form $\omega_{\text{FS}}$ can be
suitably perturbed to a symplectic form which is ambient isotopic to $k \omega$ for some
sufficiently large integer $k$. Moreover, any symplectic form on $M$ is homotopic, through
symplectic forms, to an integral one realizable as above. It is worth noting that there is
a subtle difference between holomorphic and symplectic singular surfaces in $\CP^2$:
holomorphic nodal singularities are always positive, while the symplectic ones may also be
negative.

Hence, it is interesting to study the topology of branched coverings of $\CP^2$, and a
natural question is the following: \emph{which closed oriented 4-manifolds are realizable
as branched coverings of $\CP^2$?}

In this paper we give a complete answer to this question, by proving that a closed
connected orientable PL $4$-manifold $M$ is a simple branched covering of $\CP^2$
(branched over an embedded locally flat surface) if and only if the second Betti number
$b_2(M)$ is positive. In addition, we also characterize the 4-manifolds $M$ that are
branched coverings of $S^2 \times S^2$, $S^2 \simtimes S^2$ and $S^3 \times S^1$. Finally,
we generalize these results to branched coverings of $\cs_m \CP^2 \cs_n\CPbar^2$,
$\cs_n(S^2\times S^2)$ and $\cs_n(S^3 \times S^1)$.

The proofs of all these results follow the same idea: we split $M$ into two pieces, based
on certain submanifolds $N \subset M$, and represent them as branched coverings of
standard bounded 4-manifolds by using \cite{PZ16}, then we glue such branched coverings
together. As a consequence of this argument, we also obtain a representation of the
submanifolds $N \subset M$ as branched coverings of suitable standard submanifolds of the
base spaces considered above, see Section \ref{BC-sub/sec}.

For the sake of convenience, we work in the PL category. Nevertheless, our results can be
easily translated into the smooth category as well, being $\text{PL} = \text{Diff}$ in
dimension four.

The following notations is used throughout the paper: $\CP^2$ and $\CPbar^2$ for the
complex pro\-jec\-tive space with the standard and the opposite orientation, respectively;
$S^2 \simtimes S^2 \cong \CP^2 \mathbin{\#} \CPbar^2$ for the twisted $S^2$-bundle over
$S^2$; $b_i(M)$ for the $i$-th Betti number of $M$; 
$$\beta_M \: H_2(M)/\Tor H_2(M) \times H_2(M)/\Tor H_2(M) \to \Z$$ 
for the intersection form of $M$; $b_2^+(M)$ (resp. $b_2^-(M)$) for the maximal dimension
of a vector subspace of $H_2(M; \R)$ where $\beta_M$ is positive (resp. negative)
definite; and finally $\sigma(M)$ for the signature of $M$ (see \cite{GS99}, \cite{Ki89}
or \cite{MH73}). Now we state our main theorems.

\begin{theorem}\label{main/thm}
Let $M$ be a closed connected oriented PL 4-manifold. Then, there exists a branched 
covering $p\: M \to N$ with:
\begin{itemize}
\item[\(a\)] $N = \CP^2$ $\Leftrightarrow$ $b_2^+(M) \geq 1;$
\item[\(b\)] $N = \CPbar^2$ $\Leftrightarrow$ $b_2^-(M) \geq 1;$
\item[\(c\)] $N = S^2 \simtimes S^2$ $\Leftrightarrow$ $b_2^+(M) \geq 1$ and $b_2^-(M)
\geq 1;$
\item[\(d\)] $N = S^2 \times S^2$ $\Leftrightarrow$ $b_2^+(M) \geq 1$ and $b_2^-(M) 
\geq 1;$
\item[\(e\)] $N = S^3 \times S^1$ $\Leftrightarrow$ $b_1(M) \geq 1.$
\end{itemize}
In all cases, we can assume that $p$ is a simple branched covering of degree $d\leq4$, 
whose branch set $B_p$ is a closed locally flat PL surface self-transversally immersed in 
$N$.
Moreover, $B_p$ can be desingularized to become embedded in $N$, with the following
estimates for the degree $d$: $d \leq 5$ in cases \(a\) and \(b\) for $b_2(M) \geq 2$ and
$\beta_M$ odd, case \(c\) for $\beta_M$ odd, case \(d\) for $\beta_M$ even, and case
\(e\); $d \leq 6$ in cases \(a\) and \(b\) for $b_2(M) \geq 2$ and $\beta_M$ even, case
\(c\) for $\beta_M$ even, and case \(d\) for $\beta_M$ odd; $d \leq 9$ in cases \(a\) and
\(b\) for $b_2(M) = 1$.
\end{theorem}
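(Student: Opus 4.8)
\emph{Necessity (the ``$\Rightarrow$'' implications).} Suppose $p\: M\to N$ is a branched covering of degree $d$ with branch set $B_p$ of codimension two. A $2$-cycle of $N$ in general position with respect to $B_p$ meets it only in points, over which the preimage of a transverse surface is again a surface; and two such cycles intersect generically away from $B_p$, where $p$ is an honest $d$-sheeted cover. Hence, writing $p^!$ for the transfer, $p_*p^! = d\cdot\id$ and $\beta_M(p^!a,p^!b) = d\,\beta_N(a,b)$ for all $a,b\in H_2(N;\Q)$. Thus $p^!$ is injective on $H_2(N;\Q)$ and carries a maximal positive-definite (resp.\ negative-definite) subspace for $\beta_N$ onto one for $\beta_M$, so $b_2^{\pm}(M)\geq b_2^{\pm}(N)$. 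Since $(b_2^+,b_2^-)$ is $(1,0)$ for $\CP^2$, $(0,1)$ for $\CPbar^2$, and $(1,1)$ for both $S^2\simtimes S^2$ and $S^2\times S^2$, this gives necessity in \(a\)--\(d\); for \(e\), $p_*p^!=d\cdot\id$ on $H_1(-;\Q)$ makes $p_*$ surjective, whence $b_1(M)\geq b_1(S^3\times S^1)=1$. The same computation shows that in \(a\)--\(b\) with $b_2(M)=1$ (so $\beta_M=\langle1\rangle$) the degree is forced to be a perfect square, and, by comparing parities of $\beta_M$ and $\beta_N$, that $d$ must be even whenever $\beta_N$ is odd and $\beta_M$ is even; these are exactly the constraints reflected in the degree estimates.

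\emph{Sufficiency, the decomposition.} Each target is written as a union of two ``standard bounded $4$-manifolds'' along a $3$-manifold: $\CP^2 = D(\eta_1)\cup_{S^3}D^4$ and $\CPbar^2 = D(\eta_{-1})\cup_{S^3}D^4$, with $D(\eta_k)$ the $D^2$-bundle over $S^2$ of Euler number $k$ (a tubular neighbourhood of a sphere of square $k$); $S^2\simtimes S^2$ and $S^2\times S^2$ as a regular neighbourhood of a section-plus-fibre configuration glued to a $4$-ball; and $S^3\times S^1$ as two copies of $S^3\times[0,1]$ glued along $S^3\sqcup S^3$. Correspondingly one splits $M = W\cup_{\Bd W}Y$ along a connected two-sided hypersurface, using the hypothesis to build $W$: in \(a\) (resp.\ \(b\)) let $W$ be a regular neighbourhood — enlarged by $1$-handles until its boundary is connected — of a connected embedded surface representing a class of positive (resp.\ negative) square; in \(c\)--\(d\) do this for a disjoint pair of surfaces of positive and of negative square; in \(e\), let $W$ be a collar of a connected embedded $3$-manifold $N$ Poincaré-dual to a primitive element of $H^1(M;\Z)$, cut $M$ along $N$, and view $M$ as the resulting cobordism from $N$ to $N$ with its two ends identified. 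In all cases $Y$ is the closure of the complement.

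\emph{Sufficiency, covering the pieces and gluing.} Apply the relative branched covering theorem of \cite{PZ16}: $W$ is a simple branched covering, of degree $\leq4$ and over a locally flat self-transversely immersed surface, of the standard piece attached to it ($D(\eta_{\pm1})$, the neighbourhood of the section-plus-fibre, or $S^3\times[0,1]$), the point being that the way $W$ was built around $N$ endows it with exactly the $2$-dimensional homology such a covering needs; and $Y$, whose target $D^4$ (or $S^3\times[0,1]$) carries no degree-two homology, covers it with no further obstruction. One then arranges both coverings to restrict to one and the same simple branched covering of the separating $3$-manifold $\Bd W$ onto $S^3$ (resp.\ $S^3\sqcup S^3$): such a covering of a closed $3$-manifold over $S^3$ exists by Hilden--Montesinos, and the boundary-controlled form of \cite{PZ16} extends it over both $W$ and $Y$, after a common stabilization of the degree if necessary. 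Gluing along $\Bd W$ produces $p\: M\to N$, simple, of degree $\leq4$, with $B_p$ closed, locally flat and self-transversely immersed, and restricting over the standard submanifold to a branched covering of the chosen $N\subset M$. Finally each double point of $B_p$ is removed by the covering-stabilization procedure of \cite{IP02}, which raises the degree in a controlled way; the number and signs of the double points to be treated are governed by $\beta_M$ (its parity, and, when $b_2(M)=1$, the perfect-square restriction noted above), and tracking this yields the bounds $d\leq5,6,9$.

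\emph{Main obstacle.} The delicate step is the matching: one must guarantee that the branched covering of $\Bd W$ over $S^3$ imposed by the $W$-side is of a form to which \cite{PZ16} applies on the $Y$-side too, so that it extends over $Y$. Reconciling the two — possibly forcing a common increase of the degree — while keeping $B_p$ locally flat and with as few double points as possible is what dictates the choice of $N$ and the way $W$ is enlarged by handles, and ultimately controls the final degree estimates; by comparison, the ``$\Rightarrow$'' directions and the initial decomposition of $M$ are routine.
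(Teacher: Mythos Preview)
Your necessity argument via the transfer $p^!$ is a clean homological alternative to the paper's more geometric ``preimage of a standard submanifold'' computation, and it is correct. The overall sufficiency strategy --- split $M$ along a hypersurface, cover each piece, glue --- also matches the paper. But the execution has real gaps.

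\textbf{Self-intersection must equal the degree, and Donaldson is essential.} In case \(a\) you take ``a class of positive square'' without specifying which. The point is that a tubular neighbourhood of $F$ is the disk bundle $D_{F\cdot F}(F)$, and this covers $D_1(S^2)=T_{\CP^1}$ as a $d$-fold branched cover precisely when $F\cdot F=d$. So to get $d=4$ you must find a class of square exactly $4$; for embedded branch set you need $d\geq5$ (this is the threshold in \cite{PZ16}), hence a class of square $5$, $6$, or $9$. The paper obtains these via Donaldson's diagonalization theorem: if $\beta_M$ is odd it is diagonal, if even it is indefinite and contains a hyperbolic summand, and from these building blocks one manufactures classes of the required squares. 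Your proposal never invokes Donaldson, so you have no mechanism for producing such classes. The degree bounds $5,6,9$ are then purely arithmetic --- e.g.\ with $b_2(M)=1$ the only available squares are $1,4,9,\ldots$, and $9$ is the least one $\geq5$ --- not a consequence of the \cite{IP02} stabilization you cite.

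\textbf{Cases \(c\) and \(d\) need intersecting, not disjoint, surfaces.} You correctly describe the target as a neighbourhood of a section-plus-fibre configuration glued to a $4$-ball; the section and fibre meet in one point. A branched cover of that neighbourhood therefore pulls back to a neighbourhood of two surfaces meeting transversally in $d$ points, not to disjoint surfaces of positive and negative square. The paper finds classes $\phi_1,\phi_2$ with $\phi_1\cdot\phi_1=nd$, $\phi_1\cdot\phi_2=d$, $\phi_2\cdot\phi_2=0$ (again via Donaldson), represents them by transversal surfaces, and surgers away excess intersection pairs so that exactly $d$ positive points remain. Your disjoint-pair construction cannot cover the section-plus-fibre neighbourhood.

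\textbf{The matching condition.} The obstacle you flag is real, and the paper's resolution is a specific one you do not name: the boundary covering $\Bd W\to S^3$ is \emph{ribbon fillable} (bounds a ribbon-surface branched cover of $B^4$), and this is exactly the hypothesis under which \cite{PZ16} guarantees extension over the complement. The paper checks ribbon fillability explicitly in each case (Hopf fibres with paired monodromies, etc.), rather than appealing to a generic Hilden--Montesinos cover followed by stabilization.
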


\begin{remark}
If $\beta_M$ is indefinite, then $M$ is a simple branched covering of all of $\CP^2$,
$\CPbar^2$, $S^2 \simtimes S^2$ and $S^2 \times S^2$. On the other hand, if $\beta_M$ is
positive (resp. negative) definite, then among these manifolds $\CP^2$ (resp. $\CPbar^2$)
is the only one of which $M$ is a branched covering.
\end{remark}

For the sake of completeness, we also state the following generalization of Theorem 
\ref{main/thm}. The proof is based on the same methods of that of Theorem \ref{main/thm}, 
and we will only sketch it.

\begin{theorem}\label{main-gen/thm}
Let $M$ be a closed connected oriented PL 4-manifold and let $m$ and $n$ be non-negative 
integers. Then, there exists a branched covering $p\: M \to N$ with:
\begin{itemize}
\item[\(a\)] $N = \cs_m\CP^2 \cs_n \CPbar^2$ $\Leftrightarrow$ $b_2^+(M) \geq m$ and 
$b_2^-(M) \geq n$;
\item[\(b\)] $N = \cs_n(S^2 \times S^2)$ $\Leftrightarrow$ $b_2^+(M)\geq n$ and $b_2^-(M) 
\geq n$;
\item[\(c\)] $N = \cs_n(S^3 \times S^1)$ $\Leftrightarrow$ $\pi_1(M)$ admits a free group 
of rank $n$ as a quotient.
\end{itemize}
In all cases, we can assume that $p$ is a simple branched covering of degree $d\leq4$, 
whose branch set $B_p$ is a closed locally flat PL surface self-transversally immersed in 
$N$.
Moreover, $B_p$ can be desingularized to become embedded in $N$, with the following
estimates for the degree $d$: $d \leq 5$ in case \(a\) for $b_2(M) \geq 2(m+n)$ and
$\beta_M$ odd, case \(b\) for $\beta_M$ even, and case \(c\); $d \leq 6$ in case \(a\) for
$b_2(M) \geq 2(m+n)$ and $\beta_M$ even, and case \(b\) for $\beta_M$ odd; $d \leq 9$ in
case \(a\) for $b_2(M) < 2(m+n)$.
\end{theorem}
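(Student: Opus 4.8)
\emph{Necessity.} Let $p\:M\to N$ be a branched covering of degree $d$ between closed oriented PL $4$-manifolds. Since $p_*[M]=d\,[N]$, the Poincar\'e dual of $p^*\:H^2(N;\Bbb R)\to H^2(M;\Bbb R)$ is an injection $(H_2(N;\Bbb R),\beta_N)\hookrightarrow(H_2(M;\Bbb R),\beta_M)$ multiplying the form by $d>0$; hence $b_2^\pm(M)\ge b_2^\pm(N)$, which for $N=\cs_m\CP^2\cs_n\CPbar^2$ and $N=\cs_n(S^2\times S^2)$ gives the inequalities in (a) and (b). For (c), the restriction $p_{|}\:M-p^{-1}(B_p)\to N-B_p$ is an ordinary finite covering, so $\pi_1(M-p^{-1}(B_p))$ embeds as a finite-index subgroup of $\pi_1(N-B_p)$; composing with the epimorphism $\pi_1(N-B_p)\to\pi_1(N)=F_n$ (which kills the meridians of $B_p$) we get, as image, a finite-index hence free-of-rank-$\ge n$ subgroup of $F_n$; and since the meridians of $p^{-1}(B_p)$ map to powers of meridians of $B_p$, this composition factors through $\pi_1(M)$, exhibiting $F_n$ as a quotient.

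\emph{Sufficiency.} The idea is to run the proof of Theorem \ref{main/thm} with $m+n$ (or $2n$, or $n$) parallel pieces in place of the one or two used there. First decompose $N$ into two compact bounded pieces glued along a separating $3$-manifold: in case (a), $N=N_0\cup_\partial B^4$ with $N_0$ the boundary connected sum of $m$ copies of the disk bundle over $S^2$ of Euler number $+1$ and $n$ copies of Euler number $-1$, so $\partial N_0=S^3$; in case (b), $N=N_0\cup_\partial B^4$ with $N_0=\natural_n(S^2\times S^2-\Int B^4)$ and $\partial N_0=S^3$; in case (c), $N=\bigl(\bigsqcup_{i=1}^n(S^3\times[-1,1])\bigr)\cup_\partial\bigl(S^4-\bigsqcup_{i=1}^{2n}\Int B^4\bigr)$, the $n$ collars being glued to the $2n$ boundary spheres in pairs. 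Correspondingly, split $M=M_1\cup_\partial M_2$. In cases (a) and (b): by Gram--Schmidt over $\Bbb Q$ applied to rational approximations of orthonormal bases of maximal positive and negative definite subspaces of $(H_2(M;\Bbb R),\beta_M)$, followed by clearing denominators, choose pairwise $\beta_M$-orthogonal classes $\xi_1,\dots,\xi_m\in H_2(M;\Bbb Z)$ with positive squares and $\eta_1,\dots,\eta_n\in H_2(M;\Bbb Z)$ with negative squares; represent them by pairwise disjoint embedded surfaces (all the mutual intersection numbers being zero, the intersection points are removed by piping), and let $M_1$ be the boundary connected sum of regular neighborhoods of these surfaces, joined by embedded arcs, and $M_2=\Cl(M-M_1)$. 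In case (c): take a PL map $g\:M\to\bigvee_nS^1$ inducing the given epimorphism onto $F_n$, make it transverse to one interior point $w_i$ of each petal, and let $M_1=\bigsqcup_i(P_i\times[-1,1])$ be a regular neighborhood of the disjoint closed oriented $3$-manifolds $P_i=g^{-1}(w_i)$ (which are non-separating, being Poincar\'e dual to linearly independent classes of $H^1(M;\Bbb R)$), and $M_2=\Cl(M-M_1)$.

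By \cite{PZ16}, $M_1$ and $M_2$ can now each be realized as a simple branched covering of degree $\le4$ of the corresponding standard bounded piece of $N$: in cases (a)--(b) one covers each neighborhood of a surface $\Sigma_i$ by the corresponding disk bundle over $S^2$ and then takes boundary connected sums, and covers $M_2$ by $B^4$; in case (c) one covers $P_i\times[-1,1]$ by $S^3\times[-1,1]$ as the product of a Hilden--Montesinos covering $P_i\to S^3$ with the identity, and covers $M_2$ by $S^4-\bigsqcup\Int B^4$ as in \cite{Pi95}, \cite{IP02}. The decisive point, exactly as in Theorem \ref{main/thm}, is that these coverings can be arranged to induce on the separating $3$-manifold $\partial M_1=-\partial M_2$ one and the same branched covering onto $\partial N_0$ (resp.\ onto the $S^3$'s): one compares the two induced $3$-dimensional branched coverings and reconciles them through a controlled sequence of stabilizations --- this is precisely the mechanism that produces the degree estimates. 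Gluing the two coverings along the common boundary yields $p\:M\to N$, simple of degree $\le4$ with self-transversally immersed branch surface; desingularizing the branch surface by the local moves of \cite{IP02} then raises the degree by one or two, according to the parity of $\beta_M$ and the case, to the bounds $d\le5$ or $d\le6$. The worse bound $d\le9$, occurring in case (a) precisely when $b_2(M)<2(m+n)$, reflects that there the surfaces chosen for $M_1$ exhaust $H_2(M)/\Tor H_2(M)$ and leave $M_2$ too rigid, forcing an extra stabilization; this does not happen in cases (b) and (c).

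\emph{Main obstacle.} As in Theorem \ref{main/thm}, everything hinges on matching the two branched coverings along $\partial M_1=-\partial M_2$, i.e.\ on controlling $3$-dimensional simple branched coverings up to stabilization; granted this for $m,n\le1$ (which is Theorem \ref{main/thm}), the passage to arbitrary $m,n$ is only a matter of carrying the extra parallel copies through the same construction. In the paper I would therefore make the decompositions of $N$ and the construction of $M_1$ explicit and refer back to the proof of Theorem \ref{main/thm} for the gluing and the degree bookkeeping.
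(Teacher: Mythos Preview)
Your necessity arguments are fine, and your outline for case (c) is essentially the paper's. But in cases (a) and (b) there is a genuine gap in the sufficiency argument.

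The issue is this: if $F\subset M$ has $F\cdot F=e$, then a tubular neighborhood $T_F\cong D_e(F)$ is a $d$-fold branched cover of $T_{\CP^1}\cong D_{\pm1}(S^2)$ only when $|e|=d$ (Lemma~\ref{bundle/thm}). To assemble the local coverings $t_i\:T_{F_i}\to T_{\CP^1_i}$ into a single $d$-fold covering of $N$, you therefore need \emph{all} the classes $\xi_i,\eta_j$ to have self-intersection exactly $\pm d$ for the \emph{same} $d$. Gram--Schmidt over $\Bbb Q$ followed by clearing denominators gives orthogonal integral classes, but with uncontrolled and generally unequal squares; there is no reason they should all equal $\pm4$ (or $\pm5$, $\pm6$, $\pm9$). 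The paper handles this by exhibiting, for each target degree $k$, an explicit sublattice $\Lambda_{m,n}(k)\cong\oplus_m\langle k\rangle\oplus_n\langle -k\rangle$ inside $(H_2(M)/\Tor,\beta_M)$: for $\beta_M$ odd this uses Donaldson's diagonalization theorem; for $\beta_M$ even it requires a nontrivial computation producing $\oplus_8\langle4\rangle$ and $\oplus_8\langle6\rangle$ inside $E_8$. The specific degree bounds $4,5,6,9$ are precisely the values of $k$ for which such sublattices can be found under the stated hypotheses --- they do not arise, as you suggest, from an after-the-fact desingularization via moves of \cite{IP02}; rather, Theorem~\ref{bc-ext/thm} already gives an embedded branch surface whenever $d\geq5$, so one simply starts with $k\geq5$.

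You also misidentify the ``main obstacle''. The boundary matching is not where the difficulty lies: once the tubular-neighborhood coverings $t_i$ are built (with branch set a collection of disk fibers, whose boundary links are easily seen to bound ribbon surfaces), their restriction to $\Bd(\cup_i T_{F_i})$ is ribbon fillable, and Theorem~\ref{bc-ext/thm} then extends it over the complement $W$ automatically --- there is no comparison of two independently constructed boundary coverings to reconcile. The real content is the lattice arithmetic that makes the $t_i$ exist with a common degree in the first place.
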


We observe that Theorem \ref{main-gen/thm} \(a\) includes Theorem \ref{main/thm} \(a\),
\(b\) and \(c\), being $S^2 \simtimes S^2 \cong \CP^2 \cs \CPbar^2$. Similarly, it
includes the case of $N = \cs_m(S^2 \times S^2) \cs_n(S^2 \simtimes S^2)$ with $n \geq 1$,
being $(S^2 \times S^2) \cs \CP^2 \cong (S^2 \simtimes S^2) \cs \CP^2$.

\begin{remark}
As a consequence of Theorem \ref{main-gen/thm} \(a\) and \(b\), we obtain some simply
connected 4-manifolds $N$ admitting a simple branched covering $p \: T^4 \to N$. Namely,
they are $\cs_m\CP^2 \cs_n \CPbar^2$ and $\cs_n(S^2 \times S^2)$ for any $m \leq 3$ and $n
\leq 3$. This extends the previous result by Rickman \cite{Ri06} concerning the case when
$N$ is $\cs_2(S^2 \times S^2)$. All such manifolds $N$ are \emph{quasiregularly elliptic}
(see Bonk and Heinonen \cite{BH01} for the definition), since the composition of the
universal covering of $T^4$ with $p$ is a \emph{quasiregular map} $\R^4 \to N$. The
question of which closed simply connected manifolds are quasiregularly elliptic was posed
by Gromov in \cite{Gr81,Gr07}. According to Prywes \cite{Pr19}, $b_2(M) \leq 6$ for any
closed connected orientable quasiregularly elliptic 4-manifold $M$, in particular
$\cs_n(S^2 \times S^2)$ is not quasiregularly elliptic for $n \geq4$. Hence our result
implies a sharp answer to the Gromov question for such connected sums, while the cases of
$\cs_m\CP^2 \cs_n \CPbar^2$ with $m + n \leq 6$ and $\max(m,n) \geq 4$, as well as the
exotic counterparts of all the above manifolds, remain still open.
\end{remark}

It is known that there are smooth 4-manifolds $X_{m,n}$ homeomorphic but not diffeomorphic
to $\cs_m\CP^2 \cs_n \CPbar^2$ for certain $m, n \geq 1$, see for example Donaldson
\cite{Do87-1}, Akhmedov and Park \cite{AP08,AP10} and Park, Stipsicz and Szab\'o
\cite{PSS05}. As an immediate consequence of Theorem \ref{main-gen/thm}, we get the
following corollary.

\begin{corollary}
For every smooth 4-manifold $X_{m,n}$ homeomorphic to $\cs_m\CP^2 \cs_n \CPbar^2$, there
exists a smooth simple covering $p \: X_{m,n} \to \cs_m\CP^2 \cs_n \CPbar^2$ of degree
$\leq 4$ $($resp. $\leq 9)$ branched over a smooth self-transversally immersed $($resp.
embedded\/$)$ surface.
\end{corollary}

\section{Preliminaries}

We briefly recall the notion of branched covering, in order to introduce some terminology
(see \cite{BP05} or \cite{GS99} for more details).

A map $p \: M \to N$ between compact \emph{oriented} PL manifolds having the same
dimension $n$ is called a \emph{branched covering} if it is a non-degenerate
\emph{orientation preserving} PL map with the following properties: 1)~there is an
$(n-2)$-di\-mensional polyhedral subspace $B_p \subset N$, the \emph{branch set} of $p$,
such that the restriction $p_{|} \: M - p^{-1}(B_p) \to N - B_p$ is an ordinary covering
of finite \emph{degree} $d(p)$ (we assume $B_p$ to be minimal with respect to this
property); 2)~in the bounded case, $p^{-1}(\Bd N) = \Bd M$ and $p$ preserves the product
structure of a collar of the boundaries (which implies that the restriction to the
boundary $p_{|}\: \Bd M \to \Bd N$ is a branched covering of the same degree of $p$).

Moreover, $p$ is called \emph{simple} if the monodromy of the above mentioned ordinary
covering sends every meridian around $B_p$ to a transposition. In this case, also the
restriction to the boundary $p_{|}\: \Bd M \to \Bd N$ is simple.

\begin{definition}\label{bc-pairs/def}
Let $M$ and $N$ be compact oriented connected $n$-manifolds, and let $M_1, \dots, M_k
\subset M$ and $N_1, \dots, N_k \subset N$ be compact oriented locally flat PL
submanifolds embedded in $M$ and $N$, respectively. By a \emph{$d$-fold branched covering}
$p \: (M; M_1, \dots, M_k) \to (N; N_1, \dots, N_k)$ we mean a $d$-fold branched covering
$p \: M \to N$ whose branch set is transversal to all the submanifolds $N_i$ and such that
$p(M_i) = N_i$ and $p_i=p_{|M_i}\: M_i \to N_i$ preserves the orientation for every
$i=1,\dots,k$.
\end{definition}

We note that, if $p$ is a (simple) $d$-fold branched covering as in the definition, then
each restriction $p_i \: M_i \to N_i$ is a (simple) $d_i$-fold branched covering for some
$d_i \leq d$.

Given two closed oriented locally flat PL surfaces $F_1,F_2 \subset M$ in the closed
oriented PL 4-manifold $M$, we will denote by $F_1 \cdot F_2$ their algebraic
intersection, that is the number $\beta_M([F_1],[F_2]) \in \Z$.

We also need the following technical definition. First, we remind that a properly embedded
locally flat PL surface $S \subset B^4$ is said to be \emph{ribbon} if the distance
function from the origin restricted to $S$, has no local maxima in $\Int S$. In
particular, a push in of a PL surface embedded in $S^3 \subset B^4$ is ribbon.

\begin{definition}\label{ribbon-ext/def}
A simple branched covering $p \: M \to S^3$ is said to be \emph{ribbon fillable} if it can
be extended to a simple branched covering $q \: W \to B^4$ whose branch set $B_q \subset
B^4$ is a ribbon surface (which immediately implies that $M = \Bd W$, $B_p = \Bd B_q
\subset S^3$ is a link, and $d(p) = d(q)$). For the sake of convenience, we also call
ribbon fillable any simple branched cover $p \: M \to S^3_1 \cup \dots \cup S^3_k$ that is
a disjoint union of ribbon fillable coverings.
\end{definition}

This definition is relevant in light of the following theorem, which summarises a
classical result for 4-dimensional branched coverings due to Montesinos \cite{Mo78} (see
also \cite{BP05,BP12} for an explicit direct construction, starting from a Kirby diagram),
and an application of it to 3-manifolds obtained by taking into account the
Lickorish-Wallace theorem \cite{Li62,W60}.

\begin{theorem}\label{MLW/thm}
Any compact connected oriented $4$-dimensional $2$-handlebody $W$ is a simple $3$-fold
covering of $B^4$ branched over a ribbon surface in $B^4$. Hence, every closed connected
oriented $3$-manifold is a ribbon fillable $3$-fold branched covering of $S^3$.
\end{theorem}

The degree of a branched covering of a sphere or a ball can be arbitrarily increased by
iterating the operation of stabilisation, according to the following definition.

\begin{definition}\label{stab/def}
For any $d$-fold branched covering $p \: M \to N$, where $N \cong S^n$ or $N \cong B^n$,
the \emph{covering stabilisation} of $p$ is the $(d+1)$-fold branched covering $M \to N$
obtained from $p$ by adding to the branch set $B_p$ a separate trivial $(n-2)$-sphere in
$S^n$ or proper $(n-2)$-ball in $B^n$, respectively, with monodromy $(d\ d\,{+}\,1)$ for a
meridian of it. By $k$ subsequent applications of this operation, we get a $(d+k)$-fold
branched covering $p' \: M \to N$, which we call the \emph{$k$-fold stabilisation of $p$}.
By construction, $p'$ turns out to be a simple branched covering if $p$ is simple.
Moreover, if $N \cong S^3$ and $p$ is ribbon fillable, then also $p'$ is ribbon fillable.
\end{definition}

The proofs of our results depend on the following theorem, which was established in
\cite{PZ16}.

\begin{theorem}\label{bc-ext/thm}
Let $M$ be a compact connected oriented PL $4$-manifold whose boundary has $k$ connected
components, and let $B^4_1, \dots, B^4_k \subset S^4$ be a collection of pairwise disjoint
PL 4-balls bounded by the 3-spheres $S^3_1, \dots, S^3_k \subset S^4$, respectively. Any
$d$-fold ribbon fillable simple branched covering $p \: \Bd M \to S^3_1 \cup \dots \cup
S^3_k$ of degree $d \geq 4$, extends to a simple $d$-fold covering $q \: M \to S^4 -
\Int(B^4_1 \cup \dots \cup B^4_k)$ whose branch set $B_q$ is a locally flat
self-transversal PL surface properly immersed $($embedded for $d \geq 5)$ in $S^4 -
\Int(B^4_1 \cup \dots \cup B^4_k)$.
\end{theorem}

\section{Branched coverings of disc bundles and their plumbings}\label{plumb/sec}

Given a closed connected oriented surface $F$, we denote by $\xi_{F,e} \: D_{F,e} \to F$
the oriented disc bundle over $F$ with Euler number $e \in \Z$. By abusing notation, we
also write $F \subset \Int D_{F,e}$ to indicate (the properly embedded oriented surface
image of) a PL section $F \to \Int D_{F,e}$\break of $\xi_{F,e}$.

\begin{proposition}\label{bundle/thm}
If $p \: F \to G$ is a $($simple\/$)$ branched covering of degree $d \geq 1$ between
closed connected oriented surfaces, then the pullback $p^*(\xi_{G,e})$ is bundle
isomorphic to $\xi_{F,de}$ for every $e \in \Z$. Moreover, for any PL sections $F \subset
\Int D_{F,de}$ and $G \subset \Int D_{G,e}$, $p$ lifts to a fiber-preserving
$($simple\/$)$ branched covering $\widetilde p \: (D_{F,de};F) \to (D_{G,e};G)$ having the
same degree $d$ and branch set the disjoint union of fiber discs $B_{\widetilde p} =
\xi_{G,e}^{-1}(B_p)$.
\end{proposition}

\begin{proof}
To prove the bundle isomorphism $p^*(\xi_{G,e}) \cong \xi_{F,de}$, it is enough to
consider two sections $G',G'' \subset \Int D_{G,e}$ of $\xi_{G,e}$ that intersect each
other transversally away from $\xi_{G,e}^{-1}(B_p)$, and observe that the pullback
sections $F',F''$ of $p^*(\xi_{G,e})$ satisfy $F' \cdot F''=d (G' \cdot G'') = de$.

Up to the above isomorphism, we obtain a lifting $\widetilde p \: D_{F,de} \to D_{G,e}$
associated to the pullback, which is a (simple) branched covering with branch set
$\xi_{G,e}^{-1}(B_p)$, due to the local product structure of the bundles. Moreover, given
any two sections $F$ and $G$ as in second part of the statement, we can attain $\widetilde
p(F) = G$ by composing $\widetilde p$ with an arbitrary bundle automorphism of
$\xi_{F,de}$ that sends $F$ to the pullback of $G$.
\end{proof}

\begin{proposition}\label{bundleS/thm}
For any connected simple branched covering $p\:F \to S^2$ of degree $d \geq 1$, the simple
branched covering $\widetilde p \: (D_{F,\pm d};F) \to (D_{S^2,\pm 1};S^2)$ given by the
previous proposition, restricts to a ribbon fillable branched covering $\widetilde
p_{|\Bd}\: \Bd D_{F,\pm d} \to \Bd D_{S^2,\pm 1} \cong S^3$.
\end{proposition}

\begin{proof}
By the L\"uroth-Clebsch theorem (see Berstein and Edmonds \cite{BE79}, or Bauer and
Catanese \cite{BC97} for a different approach), simple branched coverings from a closed
connected oriented genus $g$ surface to $S^2$ are classified by the degree. Therefore, up
to covering equivalence we can assume that $p$ is the $(d-2)$-fold stabilisation of the
hyperelliptic 2-fold covering $F \to S^2$.

Let $n = g(F)+d-1$. Then $B_p$ consists of $2n$ points $a_1,a'_1, \dots , a_n, a'_n$
having monodromies $(1\,2),(1\,2),\dots,(1\,2), (1\,2), (2\,3), (2\,3), \dots, (d{-}1\,d),
(d{-}1\,d)$, with respect to a suitable Hurwitz system. Thus, the branch set
$B_{\widetilde p}$ consists of $2n$ discs with those monodromies.

Since the restriction to the boundary of the bundle $\xi_{S^2,\pm1}$ is a Hopf fibration,
the branch set of $\widetilde p_{|\Bd }$, which is the boundary of $B_{\widetilde p}$,
consists of $2n$ Hopf fibers $C_1,C_1', \dots, C_n,C_n'$, such that $C_i$ and $C_i'$ have
the same monodromy.

Now, there exist $n$ pairwise disjoint properly embedded ribbon annuli $R_1, \dots, R_n
\subset B^4$, such that $\Bd R_i = C_i \cup C_i'$. Indeed, these can be obtained as the
push in of the preimages by $\xi_{S^2,\pm1}$ of $n$ pairwise disjoint arcs $A_1,\dots, A_n
\subset S^2$, such that each arc $A_i$ joins $a_i$ and $a_i'$. By choosing these arcs so
that they meet the Hurwitz system only at their end points, the monodromy of $\widetilde
p_{|\Bd }$ can be extended over $B^4 - (R_1 \cup \dots \cup R_n)$, yielding a ribbon
filling of that covering.
\end{proof}

For any disc bundles $\xi_{F_1,e_1}:D_{F_1,e_1} \to F_1$ and $\xi_{F_2,e_2}:D_{F_2,e_2}
\to F_2$ over closed connected oriented surfaces $F_1$ and $F_2$, we can form the
\emph{positive $n$-fold plumbing} $X_n(\xi_{F_1,e_1}, \xi_{F_2,e_2})$ of $D_{F_1,e_1}$ and
$D_{F_2,e_2}$ as follows. We choose two families of pairwise disjoint discs $U_1, \dots,
U_n \subset F_1$ and $V_1,\dots, V_n \subset F_2$, together with local trivializations
$\xi^{-1}_{F_1,e_1}(U_i) \cong B^2 \times B^2$ and $\xi^{-1}_{F_2,e_2}(V_i) \cong B^2
\times B^2$ of the bundles, for $i = 1,\dots, n$. Then, we define the oriented PL
4-manifold
\begin{equation}\label{X/eqn}
X_n(\xi_{F_1,e_1}, \xi_{F_2,e_2}) = D_{F_1,e_1} \cup_{\phi_1\cup\dots \cup\phi_n} 
D_{F_2,e_2},
\end{equation}
where the gluing homeomorphisms $\phi_i\: \xi^{-1}_{F_1,e_1}(U_i) \to
\xi^{-1}_{F_2,e_2}(V_i)$ are assumed to interchange the base and the fiber up to those
local trivializations. We can consider $D_{F_1,e_1}$ and $D_{F_2,e_2}$ as subspaces of
$X_n(\xi_{F_1,e_1}, \xi_{F_2,e_2})$, and we call each connected component
$\xi^{-1}_{F_1,e_1}(U_i) \cong_{\phi_i} \xi^{-1}_{F_2,e_2}(V_i)$ of $D_{F_1,e_1} \cap
D_{F_2,e_2}$ a \emph{plumbing region}.

Given two PL sections $F_1 \subset \Int D_{F_1,e_1}$ and $F_2 \subset \Int D_{F_2,e_2}$ of
$\xi_{F_1,e_1}$ and $\xi_{F_2,e_2}$, respectively, we can choose the above trivializations
in such a way that all the intersections
\pagebreak
$F_1 \cap \xi^{-1}_{F_1,e_1}(U_i)$ and $F_2 \cap \xi^{-1}_{F_2,e_2}(V_i)$ correspond to $B^2 \times 
\{0\} \subset B^2 \times B^2$. In this way, we can consider
\begin{equation}
F_1,F_2 \subset \Int X_n(\xi_{F_1,e_1}, \xi_{F_2,e_2})
\end{equation}
as properly embedded oriented PL surfaces which intersect transversally and positively at
$n$ points, in such way that $X_n(\xi_{F_1,e_1}, \xi_{F_2,e_2})$ can be thought as a
regular neighborhood of $F_1 \cup F_2$.

\begin{remark}\label{independent/rmk}
The triple $(X_n(\xi_{F_1,e_1}, \xi_{F_2,e_2}); F_1, F_2)$ does not depend, up to PL
homeomorphisms, on the choices involved in the construction.
\end{remark}

\begin{proposition}\label{plumbing/thm}
For any $($simple\/$)$ $d$-fold branched coverings $p_1 \: F_1 \to G_1$ and $p_2 \: F_2
\to G_2$ between closed connected oriented surfaces, any disc bundles\, $\xi_{F_i,d e_i}$
and\, $\xi_{G_i,e_i}$, and any PL sections $F_i \subset \xi_{F_i,d e_i}$ and $G_i \subset
\xi_{G_i,e_i}$, for $i =1,2$, there exists a $($simple\/$)$ $d$-fold branched covering
$$\widetilde p: (X_d(\xi_{F_1,d e_1}, \xi_{F_2,d e_2}); F_1,F_2) \to (X_1(\xi_{G_1,e_1},
\xi_{G_2,e_2}); G_1,G_2).$$
In addition, $\widetilde p$ is fiber-preserving away from the plumbing regions and sends
each plumbing region upstairs homeomorphically to the plumbing region downstairs, and the
branch set $B_{\widetilde p}$ is a disjoint union of fiber discs, coinciding with
$B_{\widetilde p_1} \cup B_{\widetilde p_2}$.
\end{proposition}

\begin{proof}
Proposition \ref{bundle/thm} yields $d$-fold fiber-preserving branched coverings
$\widetilde p_1 \: D_{F_1,d e_1} \to D_{G_1,e_1}$ and $\widetilde p_2 \: D_{F_2,de_2} \to
D_{G_2,e_2}$.

Let us consider the two discs $U \subset G_1$ and $V \subset G_2$ that determine the
plumbing region of $X_1(\xi_{G_1,e_1}, \xi_{G_2,e_2})$ as $\xi_{G_1,e_1}^{-1}(U)
\cong_\phi \xi_{G_2,e_2}^{-1}(V)$, where $\phi$ is the gluing homeomorphism. By Remark
\ref{independent/rmk}, we can assume that $U \cap B_{p_1} = \emptyset$ and $V \cap B_{p_2}
= \emptyset$.

It follows that $p_1^{-1}(U)$ is a disjoint union of $d$ discs $U_1,\dots, U_d \subset
F_1$, and similarly $p_2^{-1}(V)$ is a disjoint union of $d$ discs $V_1,\dots, V_d \subset
F_2$. Taking into account that $\widetilde p_1$ and $\widetilde p_2$ are fiber-preserving,
by Remark \ref{independent/rmk} again, we can assume that the plumbing regions of
$X_d(\xi_{F_1,d e_1}, \xi_{F_2,d e_2})$ are $\xi_{F_1,de_1}^{-1}(U_i) \cong_{\phi_i}
\xi_{F_2,de_2}^{-1}(V_i)$, and that the gluing homeomorphisms $\phi_i\:
\xi_{F_1,de_1}^{-1}(U_i) \to \xi_{F_2,de_2}^{-1}(V_i)$ are determined by the equations
$$\widetilde p_2 \circ \phi_i = \phi \circ \widetilde p_1,$$
for $i=1,\dots,d$. Therefore, the maps $\widetilde p_1$ and $\widetilde p_2$ can be glued
together to give a map 
$$\widetilde p\: X_d(\xi_{F_1,d e_1}, \xi_{F_2,d e_2}) \to
X_1(\xi_{G_1,e_1}, \xi_{G_2,e_2}),$$ 
which in turn is a branched covering since the gluing
is by homeomorphisms, and it is fiber-preserving away from the plumbing regions because so
are $\widetilde p_1$ and $\widetilde p_2$. Thus, $B_{\widetilde p} = B_{\widetilde p_1}
\cup B_{\widetilde p_2}$ is a disjoint union of fiber discs. Finally, the equalities
$\widetilde p(F_1) = \widetilde p_1(F_1)= G_1$ and $\widetilde p(F_2) = \widetilde
p_2(F_2)= G_2$, and the fact that $\widetilde p$ sends each plumbing region upstairs
homeomorphically to the plumbing region downstairs, are obvious by the construction.
\end{proof}

\begin{proposition}\label{bundleSxS/thm}
Given any connected simple branched coverings $p_1\:F_1 \to S^2$ and $p_2\:F_2 \to S^2$ of
degree $d \geq 1$ and any integer $e \in \Z$, we have that $\Bd X_1(\xi_{S^2,e},
\xi_{S^2,0})\cong S^3$ and the simple branched covering
$$\widetilde p\: X_d(\xi_{F_1,d e}, \xi_{F_2, 0}) \to X_1(\xi_{S^2,e}, \xi_{S^2,0}),$$
of the previous proposition, restricts to a ribbon fillable branched covering
$$\widetilde p_{|\Bd}\: \Bd X_d(\xi_{F_1,d e}, \xi_{F_2, 0}) \to \Bd X_1(\xi_{S^2,e}, 
\xi_{S^2,0})\cong S^3.$$
\end{proposition}

\begin{proof}
The manifold $X_1(\xi_{S^2,e}, \xi_{S^2,0})$ admits a handlebody decomposition with two
2-handles attached to $B^4$ along the components of the Hopf link, one with framing $e$ to
give $D_{S^2,e}$ and the other with framing $0$ to give $D_{S^2,0}$. In the corresponding
Kirby diagram of the boundary, the 0-framed component of the framed link can be cancelled
with the $e$-framed one to give a PL homeomorphism $\Bd X_1(\xi_{S^2,e}, \xi_{S^2,0})\cong
S^3$.

The branch set $B_{\widetilde p}$ coincides with $B_{\widetilde p_1} \cup B_{\widetilde
p_2}$ by the previous proposition, and in the above handlebody decomposition is given by
$2(g(F_1) + d - 1)$ discs parallel to the co-core of the 2-handle with framing $e$ and
$2(g(F_2) + d - 1)$ discs parallel to the co-core of the other 2-handle. The discs of each
family come in pairs with equal monodromies, as in the proof of Proposition
\ref{bundleS/thm}.

By looking at the boundary, we get the left side of Figure \ref{ribbons/fig}, which
depicts $B_{\widetilde p_1| \Bd}$ and $B_{\widetilde p_2| \Bd}$ as two families of circles
linked with the corresponding framed unknots. Up to the PL homeomorphism $\Bd
X_1(\xi_{S^2,e}, \xi_{S^2,0}) \cong S^3$, we get the boundary link in the right side of
Figure \ref{ribbons/fig}. To see this, we first slide the circles corresponding to
$B_{\widetilde p_2| \Bd}$, over the unknot with framing $e$, making them unlinked with the
one with framing 0. Subsequently, we slide all the branch circles over the 0-framed unknot
to separate them from the framed link, which can be now cancelled, realising the surgery
that yields the PL homeomorphism with $S^3$.

\begin{figure}[htb]
\centering
\includegraphics{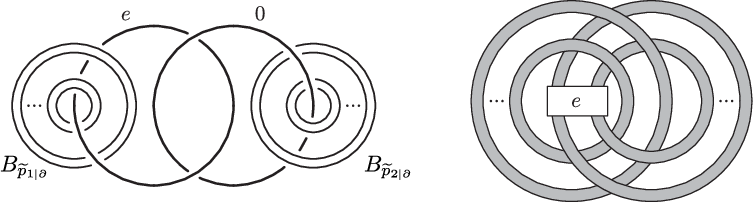}
\caption{Ribbon fillability of $\widetilde p_{|\Bd}$.}
\label{ribbons/fig}
\end{figure}

At this point, the ribbon fillability follows as in the last part of the proof of
Proposition \ref{bundleS/thm}, by extending the monodromy over the complement in $B^4$ of
a family of $g(F_1) + g(F_2) + 2d - 2$ bands, which are the push in of the bands showed in
the right side of Figure \ref{ribbons/fig}.
\end{proof}

\section{Branched coverings constructions for submanifolds}\label{BC-sub/sec}

We are ready to state and prove our results for branched coverings relative to certain
submanifolds, as we mentioned in the Introduction.

\begin{theorem}\label{bc2-cp2/thm}
Let $M$ be a closed connected oriented PL $4$-manifold and $F \subset M$ be a closed
connected oriented locally flat PL surface. If $d = |F \cdot F| \geq 4$, then there exists
a simple $d$-fold branched covering:
\begin{itemize}
\item[\(a\)] $p \: (M;F) \to (\CP^2;\CP^1)$ if $F \cdot F$ is positive;
\item[\(b\)] $p \: (M;F) \to (\CPbar^2;\CP^1)$ if $F \cdot F$ is negative.
\end{itemize}
In both cases, $F = p^{-1}(\CP^1)$, and $B_p$ is a closed locally flat PL surface
self-transver\-sally immersed $($embedded for $d \geq 5)$ in $\CP^2$ or $\CPbar^2$.
\end{theorem}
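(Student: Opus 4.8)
The plan is to realize the pair $(M;F)$ as a gluing of two branched coverings of pieces of $\CP^2$, taking advantage of the large self-intersection $d=|F\cdot F|$. First I would focus on case \(a\), where $F\cdot F = d \geq 4$ is positive; case \(b\) follows by reversing orientation. The key geometric observation is that $\CP^1\subset\CP^2$ has self-intersection $+1$, so a tubular neighborhood $\nu(\CP^1)$ is the disk bundle over $S^2$ of Euler number $1$, and its complement $\CP^2 - \mathring{\nu}(\CP^1)$ is a $4$-ball (this is the standard handle picture of $\CP^2$). Dually, a tubular neighborhood $\nu(F)$ of $F$ in $M$ is the disk bundle over $F$ of Euler number $F\cdot F = d$, while $W := M - \mathring{\nu}(F)$ is a compact $4$-manifold with boundary the circle bundle over $F$ of Euler number $d$.

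The first main step is to construct a simple $d$-fold branched covering $\nu(F)\to\nu(\CP^1)$ sending $F$ to $\CP^1$ and restricting to a genuine (unbranched near the zero sections) bundle map on a collar — here the point is that the $d$-fold fiberwise cyclic cover of the Euler number $1$ disk bundle is precisely the Euler number $d$ disk bundle, so one can take the branch locus inside $\nu(\CP^1)$ to be (a pushoff of) $\CP^1$ itself together with whatever is needed to realize the monodromy by transpositions; since we want the total covering simple, I would build this as a composition/stabilization producing transposition meridians around the branch surface, controlling that the induced boundary covering of the circle bundles is a prescribed simple branched covering $q\:\partial\nu(F)\to\partial\nu(\CP^1)$. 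The second main step is to apply \cite{PZ16} to the complementary piece: $W$ is a compact connected oriented PL $4$-manifold with boundary, and $\CP^2-\mathring\nu(\CP^1)$ is a $4$-ball, so the cited bounded-case theorem (for a suitably stabilized degree $\le d$, with the simple hypothesis and $d\ge 4$ available) gives a simple branched covering $W\to D^4$ realizing on $\partial W$ the same covering $q$ of the circle bundle $\partial\nu(F)\to\partial\nu(\CP^1)$ that came out of the first step. Finally I would glue the two branched coverings along $\partial W = \partial\nu(F)$ (matched by $q$) to obtain $p\:(M;F)\to(\CP^2;\CP^1)$, check orientation-preservation on $F$ (which holds because $F\cdot F>0$ and we used $\CP^1$ with positive self-intersection), and assemble the branch set $B_p$ as the union of the branch surface over $\nu(\CP^1)$ and the branch surface over the ball; it is self-transversally immersed, and standard desingularization at degree $d\ge5$ (adding one more sheet to resolve the nodes, as in \cite{IP02}) makes it embedded.

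The main obstacle I anticipate is the compatibility of boundary data: the branched covering of $\partial\nu(\CP^1)$ induced by the neighborhood piece must be made to coincide, up to the allowed moves (stabilization and modification of the branch set rel boundary in \cite{PZ16}), with the one needed on $\partial W$; this forces careful bookkeeping of monodromy homomorphisms from $\pi_1$ of the circle bundle of Euler number $d$ over $F$ into the symmetric group $\Sigma_d$, and in particular one must check that the degree can be kept at exactly $d=|F\cdot F|$ rather than drifting upward — this is exactly why the hypothesis $d\ge4$ is imposed (four sheets suffice to realize arbitrary simple branched coverings of $4$-manifolds, by \cite{Pi95}), and why the desingularized statement only claims embeddedness for $d\ge5$. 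A secondary technical point is verifying that the branch set is transversal to $\CP^1$ and that $F=p^{-1}(\CP^1)$ exactly (no extra preimage components), which follows from the construction of the neighborhood piece as a fiberwise cover with $F$ mapping bijectively-with-multiplicity onto $\CP^1$.
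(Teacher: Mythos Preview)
Your overall architecture matches the paper's: decompose $M$ into $\nu(F)$ and $W = M - \mathring\nu(F)$, build a branched cover on each piece over $\nu(\CP^1)$ and $B^4 \cong \CP^2 - \mathring\nu(\CP^1)$, and glue. The gap is in the first piece. A ``$d$-fold fiberwise cyclic cover'' of $\nu(\CP^1)$ branched along (a pushoff of) $\CP^1$ does not exist and would not do the job even if it did. A fiberwise cover keeps the base fixed, so $F$ would be forced to be $S^2$; and the complement $\nu(\CP^1)\setminus\CP^1$ deformation retracts to $\partial\nu(\CP^1)\cong S^3$, which is simply connected, so there is no nontrivial monodromy to support a connected branched cover along the zero section. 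Moreover, the definition of a branched covering of pairs requires $B_p$ to be \emph{transversal} to $\CP^1$, not parallel to it. The paper's construction runs orthogonally to yours: it first picks a simple $d$-fold branched covering $f\:F\to S^2\cong\CP^1$ of the \emph{base} surfaces (specifically the $d$-fold stabilization of a double cover), and then pulls back the Euler number $1$ disk bundle. Since Euler numbers multiply by degree under pullback, $f^*(D_1(S^2))\cong D_d(F)\cong\nu(F)$, and the induced map $t\:\nu(F)\to\nu(\CP^1)$ is branched along the disk fibers over $B_f$.

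This choice is not cosmetic: it is what makes the gluing step work. To apply the extension theorem from \cite{PZ16} on $W\to B^4$, the boundary covering $t_{|\partial}\:\partial W\to S^3$ must be \emph{ribbon fillable}, not merely simple of degree $\geq 4$. With the paper's construction, $B_{t_{|\partial}}$ is a collection of Hopf fibers in $S^3$ whose monodromies come in equal pairs $(1\,2),(1\,2),\dots,(d{-}1\,d),(d{-}1\,d)$, and these bound an explicit ribbon surface of linked annuli, verifying the hypothesis. Your proposal flags ``compatibility of boundary data'' as the main obstacle but does not identify ribbon fillability as the actual condition, and your neighborhood cover (were it to exist) would not produce it. One minor correction: the embeddedness of $B_p$ for $d\geq 5$ is not a separate desingularization as in \cite{IP02}; it comes for free from the extension theorem itself.
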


\begin{proof}
Case \(b\) immediately follows from case \(a\) by reversing the orientation of $M$.
So, it suffices to prove case \(a\), supposing $d = F \cdot F \geq 4$.

Let $T_F \subset M$ be a tubular neighborhood of $F$ in $M$, and $T_{\CP^1} \subset \CP^2$
be a tubular neigh\-bor\-hood of $\CP^1$ in $\CP^2$. Then, given any simple $d$-fold
branched covering $f\: F \to S^2$ and taking into account the PL homeomorphisms $T_F \cong
D_{F,d}$ and $T_{\CP^1} \cong D_{\CP^1,1} \cong D_{S^2,1}$, we can apply Proposition
\ref{bundle/thm} to obtain a simple $d$-fold branched covering $t \: (T_F, F) \to
(T_{\CP^1}, \CP^1)$. Moreover, the restriction $t_{|\Bd} \: \Bd T_F \to \Bd T_{\CP^1}$ is
ribbon fillable by Proposition \ref{bundleS/thm}.

We set $W = \Cl(M - T_F)$ and $Y = \Cl(\CP^2 - T_{CP^1}) \cong B^4$. Then, Theorem
\ref{bc-ext/thm} allows us to extend $t_{|\Bd}$ to a simple $d$-fold covering $q\:W \to Y$
branched over a self-transversally immersed (embedded for $d \geq 5$) surface.

Finally, we can define the desired covering $p$ as the union of the coverings $t$ and $q$,
which share the same restriction to the boundary. Namely, $p = t \cup_\Bd q \: M = T_F
\cup_\Bd W \to \CP^2 = T_{\CP^1} \cup_\Bd Y$.
\end{proof}

\begin{theorem}\label{bc2-s2xs2/thm}
Let $M$ be a closed connected oriented PL $4$-manifold and $F_1,F_2 \subset M$ be two
closed connected oriented locally flat PL surfaces transversal to each other, whose all
intersection points are positive. If $F_1 \cdot F_1 = n\,d$, $F_1 \cdot F_2 = d$ and $F_2
\cdot F_2 = 0$ for some integers $n$ and $d \geq 4$, then there exists a simple $d$-fold
branched covering:
\begin{itemize} 
\item[\(a\)]$p \: (M; F_1,F_2) \to (S^2 \times S^2; S^2_1,S^2_2)$, with $S^2_1$ and
$S^2_2$ respectively a section with self-in\-ter\-sec\-tion $n$ and a fiber of the trivial
bundle $S^2 \times S^2 \to S^2$, if $n$ is even;
\item[\(b\)]$p \: (M; F_1,F_2) \to (S^2 \simtimes S^2; S^2_1,S^2_2)$, with $S^2_1$ and
$S^2_2$ respectively a section with self-in\-ter\-sec\-tion $n$ and a fiber of the twisted
bundle $S^2 \simtimes S^2 \to S^2$, if $n$ is odd.
\end{itemize}
In both cases, $F_i = p^{-1}(S^2_i)$, and $B_p$ is a closed locally flat PL surface
self-transver\-sally im\-mersed $($embedded for $d \geq 5)$ in $S^2 \times S^2$ or $S^2
\simtimes S^2$.
\end{theorem}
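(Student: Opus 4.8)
The plan is to split both $M$ and the base into a regular neighborhood of the distinguished surfaces and its complement, to represent each piece as a branched covering, and to glue the two along their common boundary, following the strategy sketched in the Introduction and used for Theorem~\ref{bc2-cp2/thm}. Write $X$ for the base, that is $X = S^2 \times S^2$ when $n$ is even and $X = S^2 \simtimes S^2$ when $n$ is odd. On $X$, take $S^2_1$ a section of self-intersection $n$ and $S^2_2$ a fiber, meeting $S^2_1$ transversally at a single positive point $y$, and let $W = N(S^2_1 \cup S^2_2)$ be a regular neighborhood, that is the one-point plumbing of the $D^2$-bundle $E_n$ over $S^2$ of Euler number $n$ (a neighborhood of $S^2_1$) with $E_0 = D^2 \times S^2$ (a neighborhood of $S^2_2$). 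The associated plumbing matrix has determinant $-1$, and in fact $V := X \setminus \Int W$ is a $4$-ball: one has $X \setminus \Int N(S^2_1) \cong E_{-n}$, in which the fiber $S^2_2$ restricts to a fiber disk with trivial normal bundle, and deleting a fibered neighborhood of that disk leaves $D^2 \times D^2$. Thus $X = W \cup_{S^3} D^4$ with $\partial W = S^3$.

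Next I would cover $W$. Since $F_2$ is closed, connected and oriented and $d \geq 4$, realize $F_2 \to S^2_2 \cong S^2$ as a connected simple $d$-fold branched covering $\rho_2$ with $F_1 \cap F_2$ (a set of $d$ points) equal to $\rho_2^{-1}(y)$, all of them regular points of $\rho_2$; extending $\rho_2$ fiberwise over $E_0 = D^2 \times S^2$ produces a branched covering $N(F_2) \to E_0$, because the pullback of a trivial bundle along a degree-$d$ map is trivial, matching the normal bundle of $F_2$ (of Euler number $F_2 \cdot F_2 = 0$). Likewise realize $F_1 \to S^2_1 \cong S^2$ as a connected simple $d$-fold branched covering $\rho_1$ with $F_1 \cap F_2 = \rho_1^{-1}(y)$ at regular points, arranging all branch points of $\rho_1$ and $\rho_2$ to lie away from $y$; extending $\rho_1$ fiberwise over $E_n$ produces $N(F_1) \to E_n$, since the pullback of $E_n$ along a degree-$d$ map has Euler number $nd = F_1 \cdot F_1$. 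Over each of the $d$ points of $F_1 \cap F_2$ both extensions restrict to a local homeomorphism onto a bidisk neighborhood of $y$, so they agree there and glue into a simple $d$-fold branched covering $r \: (N; F_1, F_2) \to (W; S^2_1, S^2_2)$ with $F_i = r^{-1}(S^2_i)$, $r_{|F_i}$ orientation preserving, and branch set $B_r$ a disjoint union of fiber disks, embedded and transversal to $S^2_1$ and $S^2_2$; here $N = N(F_1 \cup F_2) \subset M$ is the plumbing of the $D^2$-bundles over $F_1$ and $F_2$ (of Euler numbers $nd$ and $0$) at the $d$ points of $F_1 \cap F_2$.

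It remains to cover the complement. Let $M' = M \setminus \Int N$; since $F_1 \cup F_2$ has codimension two in $M$, $M'$ is connected, and $r$ restricts on $\partial M' = \partial N$ to a simple $d$-fold branched covering onto $\partial W = S^3$, branched over the link $L \subset S^3$ cut out by the fiber disks of $B_r$. Invoking the representation of bounded $4$-manifolds as simple branched coverings of $D^4$ from \cite{PZ16}, with the above prescribed behaviour on the boundary, one extends this covering of $S^3$ to a simple $d$-fold branched covering $s \: M' \to D^4$ whose branch set is a locally flat PL surface in $D^4$ meeting $S^3$ in $L$. Gluing $r$ and $s$ along $S^3$ gives the desired $p \: (M; F_1, F_2) \to (X; S^2_1, S^2_2)$, a simple $d$-fold branched covering with $F_i = p^{-1}(S^2_i)$ and $B_p = B_r \cup_L B_s$ a self-transversally immersed locally flat PL surface whose double points are those of $B_s$; when $d \geq 5$, the standard desingularization (as in the embedded branch set statements above) removes these without raising the degree, so $B_p$ becomes embedded. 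The main obstacle is this last step: one must check that the branched covering of $S^3$ obtained on $\partial N$ really falls under the hypotheses of \cite{PZ16} --- which requires an explicit grip on the link $L$ and the monodromy, together with control of connectivity --- and, in tandem, that the fiberwise coverings of $E_n$ and $E_0$ can be arranged with the prescribed regular, mutually aligned behaviour over all $d$ plumbing points.
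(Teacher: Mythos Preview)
Your approach is exactly the paper's: build a simple $d$-fold covering $t$ on the plumbed tubular neighborhood $T_{F_1}\cup T_{F_2}\to T_{S^2_1}\cup T_{S^2_2}$ via Lemma~\ref{bundle/thm}, observe the complement in $X$ is a 4-ball, and then extend $t_{|\partial}$ over the complement using Theorem~\ref{bc-ext/thm} (the result from \cite{PZ16}). The decomposition, the fiberwise extensions, and the gluing are all done the same way.

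The gap you yourself flag is real and is precisely the content the paper supplies. Theorem~\ref{bc-ext/thm} does not extend an arbitrary simple $d$-fold covering of $S^3$; it requires the boundary covering to be \emph{ribbon fillable} (Definition~\ref{ribbon-ext/def}). You take $\rho_1,\rho_2$ to be generic simple $d$-fold coverings with $y$ a regular value, which gives no control over the monodromies of the branch fibers and hence no way to exhibit a ribbon filling of the resulting link $L\subset S^3$. The paper fixes this by choosing $f_i\colon F_i\to S^2_i$ to be the $d$-fold \emph{stabilization of a 2-fold covering}, so that the branch points come in pairs with monodromies $(1\,2),(1\,2),\dots,(1\,2),(1\,2),(2\,3),(2\,3),\dots,(d{-}1\,d),(d{-}1\,d)$. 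After the fiberwise extension, $B_t$ consists of $2(g(F_1)+d-1)$ disks parallel to the co-core of the $n$-framed 2-handle and $2(g(F_2)+d-1)$ disks parallel to the co-core of the $0$-framed 2-handle in the standard Hopf-link Kirby diagram for $X$, with the same paired monodromies. The boundary link $L=B_{t_{|\partial}}$ is then two families of parallel belt spheres, and the paper exhibits an explicit ribbon surface bounding it (pairs with equal monodromy are capped by ribbon annuli). Without this specific choice of $\rho_1,\rho_2$ and the attendant ribbon filling, the invocation of \cite{PZ16} is not justified, so your argument as written is incomplete at exactly the point you identify.
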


We observe that a section as specified in the above statement exists for every integer
$n$. In fact, given two copies of the trivial bundle $B^2 \times S^2 \to B^2$, we can glue
them along the boundary by the map $(\alpha,x) \mapsto (\alpha,\rho_{n\alpha}(x))$, with
$\rho_\alpha \: \R^3 \to \R^3$ the rotation of $\alpha$ radians around the third axis. In
this way, we get the trivial bundle $S^2 \times S^2 \to S^2$ or the twisted bundle $S^2
\simtimes S^2 \to S^2$, depending on the parity of $n$, with two natural sections deriving
from the two copies of $B^2 \times \{(0,0,\pm1)\}$, both having self-intersection $n$.

\begin{proof}
For the sake of convenience, we denote by $\xi \: E \to S^2$ the trivial bundle $S^2
\times S^2 \to S^2$ or the twisted bundle $S^2 \simtimes S^2 \to S^2$, depending on
whether $n$ is even or odd.

We can choose tubular neighborhoods $T_{F_1}$ of $F_1$ and $T_{F_2}$ of $F_2$, in such a
way that their union $T_{F_1} \cup T_{F_2}$ is a regular neighborhood of $F_1 \cup F_2$ in
$M$. It follows that there is a PL homeomorphism
$$(T_{F_1} \cup T_{F_2}; F_1, F_2) \cong (X_d(\xi_{F_1, d n}, \xi_{F_2,0}); F_1, F_2),$$ 
where $X_d(\xi_{F_1, d n}, \xi_{F_2,0})$ is the $d$-fold plumbing defined in Section 
\ref{plumb/sec}.

Similarly, we can choose tubular neighborhoods $T_{S^2_1}$ of $S^2_1$ and $T_{S^2_2}$ of
$S^2_2$, in such a way that their union $T_{S^2_1} \cup T_{S^2_2}$ is a regular
neighborhood of $S^2_1 \cup S^2_2$ in $E$. As above, there is a PL homeomorphism
$$(T_{S^2_1} \cup T_{S^2_2}; S^2_1, S^2_2) \cong (X_1(\xi_{S^2_1, n}, \xi_{S^2_2,0}); 
S^2_1, S^2_2).$$

Now, let $f_1 \: F_1 \to S^2_1$ and $f_2 \: F_2 \to S^2_2$ be simple $d$-fold branched
coverings. By Proposition \ref{plumbing/thm}, we get a simple $d$-fold branched covering
$$t \: (T_{F_1} \cup T_{F_2}; F_1, F_2) \to (T_{S^2_1} \cup T_{S^2_2}; S^2_1, S^2_2),$$ 
whose restriction $t_{|\Bd} \: \Bd (T_{F_1} \cup T_{F_2}) \to \Bd (T_{S^2_1} \cup
T_{S^2_2})$ is ribbon fillable by Proposition \ref{bundleSxS/thm}.

Looking at the complement of those tubular neighborhoods, we put $W = \Cl(M - (T_{F_1}
\cup T_{F_2}))$ and $Y = \Cl(E - (T_{S^2_1} \cup T_{S^2_2})) \cong B^4$. Then, we can use
Theorem \ref{bc-ext/thm} for extending $t_{|\Bd}$ to a simple $d$-fold covering $q \: W
\to Y$ branched over a self-transversally immersed (embedded for $d \geq 5$) PL surface,
and conclude the proof by putting $p = t \cup_\Bd q \: M = (T_{F_1} \cup T_{F_2}) \cup_\Bd
W \to E = (T_{S^2_1} \cup T_{S^2_2}) \cup_\Bd Y$.
\end{proof}

\begin{theorem}\label{bc3/thm}
Let $M$ be a closed connected oriented PL $4$-manifold and $N \subset M$ be a closed
connected oriented $($locally flat\/$)$ PL $3$-manifold. For any $d \geq 4$ there exists a
simple $d$-fold branched covering:
\begin{itemize}
\item[\(a\)] $p \: (M;N) \to (S^4;S^3)$ if $N$ disconnects $M$;
\item[\(b\)] $p \: (M;N) \to (S^3 \times S^1;S^3 = S^3 \times \{\ast\})$ if $N$ does not 
disconnect $M$.
\end{itemize}
In both cases, $N = p^{-1}(S^3)$, and $B_p$ is a closed locally flat PL surface 
self-transver\-sally immersed $($embedded for $d \geq 5)$ in $S^4$ or $S^3 \times S^1$.
\end{theorem}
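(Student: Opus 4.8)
The strategy is to split $M$ along $N$, represent the resulting bounded pieces as simple branched coverings of the standard models $B^4$ and $S^3 \times I$ via \cite{PZ16}, and then reassemble the coverings along a common boundary covering of $S^3$. Fix $d \geq 4$. First I would choose a simple $d$-fold branched covering $q \: N \to S^3$: one exists of degree $3$ by the classical fact that every closed oriented $3$-manifold is a simple $3$-fold branched covering of $S^3$, and it is brought to degree $d$ by the standard stabilization move (adjoining a trivial sheet together with a band joining its branch link to the old branch set). This $q$ will be the seam along which every gluing is performed, both upstairs and downstairs.

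In case \(a\), when $N$ separates $M$, connectedness of $M$ and $N$ gives $M = M_1 \cup_N M_2$ with $M_1,M_2$ compact connected oriented $4$-manifolds, each with boundary $N$, glued along the identity of $N$. Regarding $N = \partial M_i$ and $S^3 = \partial B^4$, I would invoke \cite{PZ16} to extend $q$ to a simple $d$-fold branched covering $p_i \: M_i \to B^4$ whose branch set $B_{p_i}$ is a locally flat PL surface, self-transversally immersed in $B^4$, transverse to $\partial B^4$, with $\partial B_{p_i} = B_q$. Since $p_1$ and $p_2$ restrict to the same covering $q$ over $S^3$ and $S^4 = B^4 \cup_{S^3} B^4$, they glue (respecting orientations) to a simple $d$-fold branched covering $p \: M \to S^4$ with closed self-transversally immersed branch surface $B_p = B_{p_1} \cup_{B_q} B_{p_2}$; moreover $N = p^{-1}(S^3)$, because $p_i^{-1}(S^3) = \partial M_i = N$ by the definition of branched covering.

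In case \(b\), when $N$ does not separate $M$, cutting $M$ along $N$ yields a connected compact oriented $W$ with $\partial W = N_+ \sqcup N_-$, two copies of $N$, and $M$ is recovered by identifying $N_+$ with $N_-$ along the identity of $N$. With $S^3 \times I$ as the standard model, $\partial(S^3\times I) = S^3 \sqcup S^3$, I would use \cite{PZ16} to extend $q \sqcup q$ on $\partial W$ to a simple $d$-fold branched covering $p_W \: W \to S^3 \times I$, sending $N_+$ to $S^3 \times \{0\}$ and $N_-$ to $S^3 \times \{1\}$, with branch set transverse to the boundary. As $p_W$ restricts to $q$ on both ends, gluing $N_+$ to $N_-$ along the identity of $N$ corresponds downstairs to gluing $S^3 \times \{0\}$ to $S^3 \times \{1\}$ along the identity of $S^3$; hence $p_W$ descends to a simple $d$-fold branched covering $p \: M \to S^3 \times S^1$ with $N = p^{-1}(S^3 \times \{\ast\})$ and closed self-transversally immersed branch surface $B_p$ the image of $B_{p_W}$.

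Finally, in both cases the double points of $B_p$ can be removed, keeping the degree $d \geq 5$, by the local desingularization of \cite{IP02}, producing an embedded branch surface. The one genuine input is the relative form of \cite{PZ16}: that a prescribed simple branched covering of $S^3$ on the boundary --- on one boundary component in case \(a\), on both in case \(b\) --- can always be extended, for every $d \geq 4$, to a simple $d$-fold branched covering of the standard model $B^4$, resp.\ $S^3 \times I$. This extension step is the main obstacle; past it, the argument is pure cutting and pasting of branched coverings, together with the elementary observation that a connected separating $N$ leaves both complementary pieces connected.
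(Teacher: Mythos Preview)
Your overall architecture matches the paper's: choose a simple $d$-fold covering $q\colon N\to S^3$, split $M$ along $N$, extend over the pieces via \cite{PZ16}, and reglue. But there is one genuine gap. The extension theorem from \cite{PZ16} (stated in the paper as Theorem~\ref{bc-ext/thm}) does \emph{not} extend an arbitrary simple branched covering of $S^3$; it requires the boundary covering to be \emph{ribbon fillable}, i.e.\ to bound a simple covering of $B^4$ branched over a ribbon surface. You assert that ``a prescribed simple branched covering of $S^3$ on the boundary \dots\ can always be extended,'' but this is exactly what fails without the ribbon hypothesis. The fix is easy---one chooses $q$ from the start to be ribbon fillable (the Montesinos construction from an integral surgery presentation of $N$ gives such a $3$-fold covering, and stabilization preserves ribbon fillability)---but it must be said, since this is the only nontrivial condition controlling the whole argument.

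Two smaller points. In case~\(b\) your decomposition differs from the paper's: you cut $M$ open along $N$ to a connected $W$ with $\partial W=N\sqcup N$ and apply \cite{PZ16} directly with two boundary spheres (using $S^3\times I\cong S^4-\Int(B^4_1\cup B^4_2)$), whereas the paper takes $M_1$ to be a collar $N\times I$, maps it by $q\times\id$ to $S^3\times S^1_-$, and applies \cite{PZ16} only to the complement $M_2$. Both are valid and both invoke the $k=2$ case of Theorem~\ref{bc-ext/thm}; yours is a touch more economical, while the paper's makes the product structure over half of $S^1$ explicit. Finally, your appeal to \cite{IP02} for the embedded case is misplaced: the desingularization in \cite{IP02} is a degree-raising move specific to covers of $S^4$, and you do not need it here anyway, since Theorem~\ref{bc-ext/thm} already yields an embedded branch surface whenever $d\geq 5$.
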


\begin{proof}
According to Theorem \ref{MLW/thm}, and up to covering stabilization, there exists a
ribbon fillable $d$-fold branched covering $c \: N \to S^3$.

If $N$ disconnects $M$, let $M_1,M_2 \subset M$ be the closures of the two connected
components of $M - N$. Then, $M_1$ and $M_2$ are two PL compact oriented 4-manifolds with
$\Bd M_1 = \Bd M_2 = N$, such that $M = M_1 \cup M_2$. By Theorem \ref{bc-ext/thm}, the
branched covering $c$ extends to two simple $d$-fold branched coverings $p_1 \: M_1 \to
S^4_-$ and $p_2 \: M_2 \to S^4_+$, both branched over a locally flat PL surface
self-transversally immersed (embedded if $d \geq 5$) in the base space, where $S^n_\pm
\subset S^n$ are the two hemispheres bounded by $S^{n-1} \subset S^n$. Therefore, we can
put $p = p_1 \cup p_2 \: M \to S^4$.

In the case where $N$ does not disconnect $M$, we consider the decomposition $M = M_1 \cup
M_2$, with $M_1$ a collar of $N$ in $M$, and $M_2 = \Cl(M - M_1)$. The simple $d$-fold
covering $p_1 = c \times \id_{S^1_-} \: M_1 \cong N \times S^1_- \to S^3 \times S^1_-$ is
branched over the locally flat PL surface $B_c \times S^1_-$, which is properly embedded
in $S^3 \times S^1_-$. The restriction of $p_1$ to the boundary is a ribbon fillable
$d$-fold branched covering $\Bd M_1 = \Bd M_2 \to \Bd (S^3 \times S^1_-) = \Bd(S^3 \times
S^1_+)$, which by Theorem \ref{bc-ext/thm} admits a simple $d$-fold extension $p_2 \: M_2
\to S^3 \times S^1_+$ branched over a locally flat PL surface self-transversally immersed
(embedded if $d \geq 5$) in $S^3 \times S^1_+$. So, also in this case we can conclude by
putting $p = p_1 \cup p_2 \: M \to S^3 \times S^1$.
\end{proof}

Our last result of this section is not related to the main theorem. Still, we include it
for the sake of completeness, since it provides a representation of surfaces in
4-manifolds as branched covering of trivial 2-spheres in $S^4$ (cf. \cite{MP98} for links
in 3-manifolds).

\begin{theorem}\label{bc2/thm}
Let $M$ be a closed connected oriented PL $4$-manifold and $F \subset M$ be a closed
oriented locally flat PL surface with $k$ connected components $F_1,\dots,F_k$, such that
$F_i \cdot F_i = 0$ for every $i = 1,\dots,k$ $($that is, the normal bundle $\nu_F$ is
trivial\/$)$. Then, for any $d \geq 4$ there is a simple $d$-fold branched covering $p \:
(M; F) \to (S^4; T_k)$, with $T_k \subset S^4$ the trivial $2$-link with $k$ spherical
components and $B_p \subset S^4$ a closed locally flat PL surface self-transversally
immersed $($embedded for $d \geq 5)$ in $S^4$, which is transversal to $T_k$. Moreover,
$p$ can be chosen in such a way that each restriction $p_{|F_i} \: F_i \to p(F_i) \cong
S^2$ is equivalent to any given simple branched covering of degree $d_i \leq d - 2$. In
particular, if $F$ is consists of $2$-spheres, we can assume $B_p \cap T_k = \emptyset$,
hence $p$ is the trivial $d$-fold covering over $T_k$.
\end{theorem}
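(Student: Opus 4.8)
The plan is to cut both $M$ and $S^4$ along the boundaries of tubular neighbourhoods of $F$ and of $T_k$, to build the covering explicitly over the two neighbourhoods, and to extend it over the complements by means of \cite{PZ16}.

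Since $\nu_F$ is trivial, a tubular neighbourhood of $F$ can be identified with $V = \bigsqcup_{i=1}^k F_i \times B^2$; set $W = \Cl(M - V)$, so that $\Bd W = \Bd V = \bigsqcup_i F_i \times S^1$. On the base, take a standard tubular neighbourhood $V_0 = \bigsqcup_{i=1}^k S^2 \times B^2$ of $T_k$ in $S^4$, whose cores form the trivial $2$-link, and put $W_0 = \Cl(S^4 - V_0)$, so that $\Bd W_0 = \Bd V_0 = \bigsqcup_i S^2 \times S^1$. First I would fix the prescribed simple branched coverings $q_i \: F_i \to S^2$, of degrees $d_i \le d - 2$ (if all the $F_i$ are $2$-spheres one is free to take $q_i = \id$, $d_i = 1$). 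Over the $i$-th component of $V_0$ I would let the covering be the disjoint union of $q_i \times \id_{B^2} \: F_i \times B^2 \to S^2 \times B^2$ and of $d - d_i$ trivial sheets: this is a simple $d$-fold branched covering whose branch set is the disjoint union of the vertical disks $\{x\} \times B^2$, with $x$ ranging over the finitely many distinct branch points of the $q_i$'s, so it is embedded, meets $T_k$ transversally and exactly in $\bigcup_i B_{q_i}$, and restricts on $\Bd V_0$ to the disjoint union of the branched coverings $q_i \times \id_{S^1}$ and of $d - d_i$ trivial sheets over the respective components.

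Next I would treat the complement. Choosing inside $\Int W$ a system of $\sum_i (d - d_i)$ disjoint unknotted $2$-spheres with trivial normal bundle, grouped $d - d_i$ per index $i$ and lying in small balls, and letting $W'$ be the complement in $W$ of open tubular neighbourhoods of them, one obtains a connected bounded $4$-manifold from which $W$, hence $M$, is recovered by gluing back the corresponding $S^2 \times B^2$'s, and with $\Bd W' = (\bigsqcup_i F_i \times S^1) \sqcup (\bigsqcup_i (d - d_i)\,S^2 \times S^1)$. The crucial step is then to invoke \cite{PZ16}: since $d \ge 4$ and the boundary covering prescribed above on $\Bd W'$ — namely $q_i \times \id_{S^1}$ together with $d - d_i$ trivial sheets over the relevant components — is suitably stabilised, it extends to a simple $d$-fold branched covering $W' \to W_0$ whose branch set is a locally flat PL surface self-transversally immersed in $W_0$, embedded when $d \ge 5$, meeting $\Bd W_0$ precisely along the circles $\{x\} \times S^1$ coming from the branch points of the $q_i$'s and disjoint from the trivial-sheet components. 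The two coverings then agree on $\Bd V_0 = \Bd W_0$ and glue to a simple $d$-fold branched covering $p \: M \to S^4$: by construction $F \subset V$ is in place and $p_{|F_i} = q_i$ onto the $i$-th component of $T_k$, the branch set $B_p$ is a closed locally flat PL surface, self-transversally immersed (embedded for $d \ge 5$, all double points lying in $W_0$) and transversal to $T_k$, and $B_p \cap T_k = \bigcup_i B_{q_i}$. When all the $F_i$ are spheres and the $q_i$ are identities one has $B_{q_i} = \emptyset$, whence $B_p \cap T_k = \emptyset$ and $p^{-1}(T_k)$ is made of $d$ parallel copies of $T_k$.

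The main obstacle I expect is the extension step over $W_0$: one must identify $W_0 = \Cl(S^4 - \nu(T_k))$ concretely as a standard bounded $4$-manifold to which \cite{PZ16} applies, and then check that the branched covering it provides can be arranged with exactly the prescribed boundary monodromy — this is where $d_i \le d - 2$ is used, ensuring at least two trivial sheets on every boundary component so that the stabilisation moves of \cite{PZ16} are available — and with the branch surface meeting $\Bd W_0$ only along the circles dictated by the $q_i$'s, so that it closes up correctly to $B_p$ after gluing. Checking this compatibility along $\Bd V_0$, together with the routine verification that the reassembled total space is $M$ with $F$ in the prescribed position, completes the proof.
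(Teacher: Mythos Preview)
Your proposed decomposition has a genuine gap at the extension step, and it is not just a matter of bookkeeping. The extension result from \cite{PZ16} that the paper relies on (stated here as Theorem~\ref{bc-ext/thm}) takes as input a ribbon-fillable simple covering of a disjoint union of $3$-\emph{spheres} and extends it over a punctured $S^4$; the base is always $S^4 - \Int(B^4_1 \cup \dots \cup B^4_k)$, with boundary a union of $S^3$'s. Your $W_0 = \Cl(S^4 - \nu(T_k))$ has boundary a union of copies of $S^2 \times S^1$ (for $k=1$ it is $B^3 \times S^1$), so neither the notion of ribbon fillability nor the extension theorem applies to the covering $\Bd W' \to \Bd W_0$ you want to extend. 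The hypothesis $d_i \le d - 2$ does not rescue this: the obstruction is the topology of the target, not the availability of spare sheets.

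The paper sidesteps the issue by a different decomposition. Rather than cutting along $4$-dimensional tubular neighbourhoods $F_i \times B^2$, it exploits the triviality of $\nu_F$ to take $3$-dimensional ribbons $N_i \cong F_i \times [0,1]$ in $M$, removes $d - d_i - 2$ open $3$-balls from each to get $N'_i$, and then uses Berstein--Edmonds \cite{BE79} to extend a prescribed simple $d$-fold covering $\Bd N'_i \to S^2$ (with $F_i$ forming $d_i$ sheets, the parallel copy $F'_i$ two more, and the removed-ball boundaries the rest) to a simple $d$-fold covering $c_i \: N'_i \to B^3$. The $B^3_i$'s are now standard $3$-balls in $S^4$; their collars are $4$-balls, and Lemma~\ref{ribbon/thm} shows that the product coverings $c_i \times \id_{B^1}$ have ribbon branch surfaces. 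Lemma~\ref{bc3bd/thm} packages this so that \cite{PZ16} can be applied legitimately to the complement of these $4$-balls. Each $F_i$ then sits over $\Bd B^3_i \cong S^2$, a component of the trivial $2$-link. This passage through codimension-one submanifolds mapping to $3$-balls, so that the boundaries become $S^3$'s, is precisely the idea your tubular-neighbourhood approach lacks.
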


We note that any closed oriented locally flat PL surface $F \subset S^4$ admits a branched
covering representation as in the theorem.

For the proof of Theorem \ref{bc2/thm} we need two lemmas.

\begin{lemma}\label{ribbon/thm}
Let $C \subset B^3$ be a properly embedded $($not necessarily connected\/$)$ compact
curve. Then, the surface $F = C \times B^1 \subset B^3 \times B^1 \cong B^4$ is ribbon.
\end{lemma}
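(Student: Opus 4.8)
The plan is to check directly from the definition that $F=C\times B^1$ is ribbon, that is, to produce on $B^4=B^3\times B^1$ a Morse function equivalent to the radial one whose restriction to $F$ has no critical point of index $2$. (Recall that a properly embedded surface in $B^4$ is ribbon precisely when the radial function restricts to it as a Morse function without local maxima in the interior.)

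First I would fix a convenient model: take $B^3=\{x:|x|\le1\}$, $B^1=[-1,1]$, and set $\Phi\: B^3\times B^1\to[0,1]$ by $\Phi(x,t)=1-(1-|x|^2)(1-t^2)$. A direct computation shows that $\Phi$ has a single critical point in the interior, the minimum at $(0,0)$, that $\Phi^{-1}(1)=\partial(B^3\times B^1)$, and that $\Phi^{-1}(c)$ is a $3$-sphere lying in the interior for every $c\in(0,1)$; hence $\Phi$ is equivalent to the radial function of $B^4$ (working smoothly is harmless, since $\text{PL}=\text{Diff}$ in dimension four, or one may PL-approximate throughout). It is worth noting that the naive candidate $|x|+\varepsilon t^2$ does \emph{not} work: on the ``box'' $B^3\times B^1$ its sublevel sets meet the caps $B^3\times\{\pm1\}$ too early, so it is not equivalent to the radial function; the product form above repairs this while retaining the factor $(1-t^2)$, which is exactly what makes the restriction to $F$ a genuine Morse function.

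Next I would put $C$ in general position. By a proper PL isotopy of $B^3$ — which extends to an ambient isotopy of the pair $(B^4,F)=(B^3\times B^1,\,C\times B^1)$ and so preserves the ribbon property — we may assume that $C$ misses the origin and that $r=|\cdot|$ restricts to $C$ as a Morse function whose interior critical points have pairwise distinct values, with no critical points on $\partial B^3$ other than the endpoints of the arc components of $C$, which are maxima of value $1$. Then $\Phi_{|F}(c,t)=1-(1-r(c)^2)(1-t^2)$ depends only on $(r(c),t)$, and since $r>0$ on $C$ the vanishing of $\partial_t\Phi_{|F}$ forces every critical point of $\Phi_{|F}$ to lie on $\{t=0\}$, over one of the finitely many critical points of $r_{|C}$. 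At such a point $(c_0,0)$ the Hessian of $\Phi_{|F}$ is block diagonal, with a positive $t$-block and a $c$-block of the same sign as the Hessian of $r_{|C}$ at $c_0$; hence every interior critical point of $\Phi_{|F}$ has index $0$ (over a minimum of $r_{|C}$) or $1$ (over a maximum), while $\Phi_{|F}\equiv 1$ on $\partial F$. In particular $\Phi_{|F}$ has no interior local maximum, so $F$ is ribbon.

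The step needing care — and the only genuine obstacle — is precisely this interplay: a Morse function on the box $B^3\times B^1$ equivalent to the radial one must not be constant along the $B^1$-direction, for otherwise the product structure of $F$ would create whole arcs of critical points rather than isolated ones, whereas a function varying in $t$ (such as $|x|+\varepsilon t^2$) tends to fail radiality because its level sets meet $\partial B^4$ too early; the ansatz $1-(1-|x|^2)(1-t^2)$ is chosen so as to avoid both defects simultaneously. The rest (general position for $C$, the Hessian bookkeeping, and the identification with the standard radial ball) is routine. Equivalently, one may argue handle-theoretically: a handle decomposition of the $1$-manifold $C\subset B^3$ has only $0$- and $1$-handles and is realised by $r_{|C}$, and crossing it with $B^1$ produces an ambient handle decomposition of $F\subset B^4$ with only $0$- and $1$-handles.
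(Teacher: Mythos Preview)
Your argument is correct. The function $\Phi(x,t)=1-(1-|x|^2)(1-t^2)$ does what you claim: it has a unique interior critical point (the minimum at the origin), its level sets are topological $3$-spheres filling $B^3\times B^1$, and $\Phi^{-1}(1)=\partial(B^3\times B^1)$, so it is equivalent to the radial function on $B^4$. Your analysis of $\Phi_{|F}$ is also right: after isotoping $C$ off the origin and making $r_{|C}$ Morse, the interior critical points of $\Phi_{|F}$ sit over those of $r_{|C}$ at $t=0$, the Hessian is block diagonal with a positive $t$-block, and hence no index-$2$ points occur.

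This is, however, a genuinely different route from the paper's main proof. The paper argues diagrammatically: after pushing $C$ off the origin and putting its radial projection $\pi_0(C)\subset S^2$ in general position, it computes the radial projection $\pi_{(0,0)}(F)\subset\partial(B^3\times B^1)\cong S^3$ explicitly and observes that the only self-intersections are ribbon arcs, one for each crossing of the planar diagram of $C$. That approach produces the ribbon presentation of $F$ directly, which is useful downstream (the lemma is invoked precisely to verify ribbon fillability of certain boundary coverings). Your Morse-theoretic argument is cleaner conceptually and closer to the remark the authors make immediately after the lemma, where they note that in the smooth category one can realize $F$ as a ruled surface missing the origin so that the distance function has no interior maxima on $F$; your explicit $\Phi$ is a concrete way of carrying that sketch out while respecting the product structure of the box. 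The trade-off is that your proof certifies ribbonness without exhibiting the ribbon singularities, whereas the paper's proof hands you the ribbon diagram.
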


\begin{proof}
Up to ambient isotopy, we can assume that the origin $0 \in B^3$ does not belong to $C$
and that the image $D = \pi_0(C) \subset S^2$ of $C$ under the radial projection $\pi_0 \:
B^3 - \{0\} \to S^2$ from $0$ forms only transversal double points (it gives a diagram of
$C$).\break Let $\pi_{(0,0)} \: (B^3 \times B^1) -\{(0,0)\} \to \Bd(B^3 \times B^1) = (S^2
\times B^1) \cup (B^3 \times S^0) \cong S^3$ the radial\break projection from the origin $(0,0)
\in B^3 \times B^1$. Then, for each $x \in C$
\pagebreak the image under $\pi_{(0,0)}$ of the segment $\{x\} 
\times B^1$ is given by $\pi_{(0,0)}(\{x\}\times B^1) = (\pi_0(\{x\}) \times B^1) \cup ([x,\pi_0(x)] 
\times S^0) \subset \Bd(B^3 \times B^1)$, where $[x,\pi_0(x)] \subset B^3$ denotes the 
segment spanned by $x$ and $\pi_0(x)$. It follows that the image $\pi_{(0,0)}(F) \subset 
\Bd(B^3 \times B^1)$ forms only ribbon intersections, consisting of a single double arc 
for each double point of $D$. Hence, $F$ is a ribbon surface.
\end{proof}

\begin{remark}\label{ribbon/rem}
In the smooth category, one could argue that the surface $F$ can be realized in $B^4$ as a
ruled surface, not passing through the origin. Then, the distance from the origin
restricts to a function on $F$ without local maxima in $\Int F$, which implies that $F$ is
ribbon.
\end{remark}

\begin{lemma}\label{bc3bd/thm}
Let $N_1,\dots,N_k \subset M$ be pairwise disjoint compact oriented $($locally flat\/$)$
PL $3$-manifolds with non-empty boundary, and let $B^3_1,\dots,B^3_k \subset S^4$ be
pairwise disjoint PL 3-balls. For every $i=1, \dots, k$, let $c_i \: N_i \to B^3_i$ be a
simple $d$-fold branched covering, with $B_{c_i} \subset B^3_i$ a properly embedded
compact curve and $d \geq 4$. Then, $c = c_1 \cup \dots \cup c_k$ extends to a simple
$d$-fold branched covering $p \: M \to S^4$ with $B_p$ a locally flat PL surface
self-transversally immersed $($embedded for $d \geq 5)$ in $S^4$.
\end{lemma}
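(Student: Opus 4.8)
The plan is to follow the scheme of the proof of Theorem~\ref{bc3/thm}(b): thicken each $N_i$ to a collar, extend $c_i$ across it by crossing with an interval, and cover the rest of $M$ by invoking Theorem~\ref{bc-ext/thm}. The hypothesis that each branch set $B_{c_i}$ is a properly embedded curve is exactly what makes Lemma~\ref{ribbon/thm} applicable, and the latter supplies the ribbon fillability required by Theorem~\ref{bc-ext/thm} (which also needs $d\geq4$). Note that $M$ is necessarily closed, being the source of a branched covering onto $S^4$, and we take it connected.

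For each $i$, I would fix a regular neighborhood $T_i \cong N_i \times [-1,1] \subset M$ of $N_i$, with $N_i = N_i \times \{0\}$ (the normal bundle is trivial since $N_i$ and $M$ are oriented); after rounding corners, $\Bd T_i$ is the double of $N_i$ along $\Bd N_i$. Similarly, I would take pairwise disjoint regular neighborhoods $Q_i \cong B^3_i \times [-1,1] \cong B^4$ of the balls $B^3_i$ in $S^4$, with $B^3_i = B^3_i \times \{0\}$, and set $S^3_i = \Bd Q_i \cong S^3$. The product map $\hat c_i = c_i \times \id_{[-1,1]} \: T_i \to Q_i$ is then a simple $d$-fold branched covering extending $c_i$ (simplicity and the condition $\hat c_i^{-1}(\Bd Q_i) = \Bd T_i$ are inherited from $c_i$), with branch set $B_{\hat c_i} = B_{c_i} \times [-1,1]$.

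The key observation is that $\hat c_i$ exhibits its own boundary restriction as ribbon fillable: since $Q_i \cong B^4$ and $B_{c_i} \subset B^3_i$ is a properly embedded compact curve, Lemma~\ref{ribbon/thm} says that $B_{c_i} \times [-1,1]$ is a ribbon surface in $B^3_i \times [-1,1] \cong B^4$, so $\hat c_i|_\Bd \: \Bd T_i \to S^3_i$ is ribbon fillable with filling $\hat c_i$. Hence $c' = \bigcup_i (\hat c_i|_\Bd) \: \bigcup_i \Bd T_i \to \bigcup_i S^3_i$ is ribbon fillable. Now put $W = \Cl(M - \bigcup_i T_i)$, so that $M = W \cup \bigcup_i T_i$ with $\Bd W = \bigcup_i \Bd T_i$, and correspondingly $S^4 = (S^4 - \Int \bigcup_i Q_i) \cup \bigcup_i Q_i$ with $\Bd(S^4 - \Int \bigcup_i Q_i) = \bigcup_i S^3_i$. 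Here $W$ is connected, and this is the one point that needs an argument: a codimension-one PL submanifold with non-empty boundary never separates a connected manifold, since any path meeting the $N_i$ transversally can be pushed off them by sliding its intersection points out across $\Bd N_i$. Therefore Theorem~\ref{bc-ext/thm} applies, with the $Q_i$ in the role of the 4-balls, and extends $c'$ to a simple $d$-fold branched covering $q \: W \to S^4 - \Int \bigcup_i Q_i$ whose branch set is a locally flat PL surface self-transversally immersed in the base, and embedded for $d \geq 5$.

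Gluing then yields $p = q \cup \bigcup_i \hat c_i \: M \to S^4$, a simple $d$-fold branched covering: the two sides agree along the common boundary (this is what ``extends'' means in Theorem~\ref{bc-ext/thm}), and $p$ restricts to $c_i$ on $N_i = N_i \times \{0\}$, hence extends $c$. Its branch set $B_p = B_q \cup \bigcup_i (B_{c_i} \times [-1,1])$ is a locally flat PL surface; each $B_{c_i} \times [-1,1]$ is embedded and the two pieces meet only along the separating spheres $S^3_i$, transversally and along a common link, so the self-intersections of $B_p$ are precisely those of $B_q$ — whence $B_p$ is self-transversally immersed in $S^4$, and embedded when $d \geq 5$. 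I expect the only genuine obstacle to be the connectivity of $W$, where the hypothesis $\Bd N_i \neq \emptyset$ is essential: without it a closed separating $N_i$ would force a component-by-component treatment in which the covering degrees would not match across the gluing (compare cases (a) and (b) of Theorem~\ref{bc3/thm}). The remaining verifications — the corner-rounding identifying $\Bd T_i$ and $\Bd Q_i$ with the double of $N_i$ and with $S^3$ compatibly with $\hat c_i$, and the transversal matching of $B_q$ with the $B_{\hat c_i}$ along the $S^3_i$ — are routine.
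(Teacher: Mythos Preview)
Your proof is correct and follows essentially the same approach as the paper: thicken each $N_i$ and each $B^3_i$ to a product with an interval, extend $c_i$ by crossing with the identity, invoke Lemma~\ref{ribbon/thm} to see the resulting branch surface is ribbon so that the boundary restriction is ribbon fillable, and then apply Theorem~\ref{bc-ext/thm} to the complement and glue. The one point you add beyond the paper is the explicit verification that $W=\Cl(M-\bigcup_i T_i)$ is connected (using $\Bd N_i\neq\emptyset$), which is indeed required by Theorem~\ref{bc-ext/thm} and is left implicit in the paper's argument.
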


\begin{proof}
We consider pairwise disjoint collars $C_i = C(N_i) \subset M$ of the 3-manifolds $N_i$ in
$M$ and pairwise disjoint collars $D_i = C(B^3_i) \subset S^4$ of the 3-balls $B^3_i$ in
$S^4$. Then, we have $C_i \cong N_i \times B^1$ with $N_i$ canonically identified to $N_i
\times \{0\}$, and $D_i \cong B^3_i \times B^1$ with $B^3_i$ canonically identified to
$B^3_i \times \{0\}$. Up to these identifications and assuming all the collars positively
oriented, the branched coverings $c_i$ extend to simple $d$-fold coverings $c'_i = c_i
\times \id_{B^1} \: C_i \to D_i$. By Lemma \ref{ribbon/thm}, each branch set $B_{c'_i}$ is
a ribbon surface in $D_i \cong B^4$. Now, we consider the simple $d$-fold branched
covering $p_1 = \cup_i c'_i \: {\cup_i} C_i \to \cup_i D_i$, and put $X = \Cl(M - \cup_i
C_i)$ and $Y = \Cl(S^4 - \cup_i D_i)$. The restriction to the boundary of $p_1$ gives a
simple $d$-fold branched covering $p_{1|\Bd} \: \Bd X \to \Bd Y$, which is ribbon fillable
by construction. Therefore, Theorem \ref{bc-ext/thm} allows us to extend $p_{1|\Bd}$ to a
simple $d$-fold branched covering $p_2 \: X \to Y$ with $B_{p_2}$ a locally flat PL
surface self-transversally immersed (embedded for $d \geq 5$) in $Y$. Thus, we can
conclude the proof by putting $p = p_1 \cup_\Bd p_2$.
\end{proof}

\begin{proof}[Theorem \ref{bc2/thm}]
Since the normal bundle $\nu_F$ is trivial, for every $i = 1, \dots, k$ we can find a
3-dimensional locally flat PL ribbon $N_i \cong F_i \times [0,1]$ in $M$ such that $\Bd
N_i = F_i \cup F'_i$, with $F'_i \subset M$ a ``parallel'' copy of $F_i$ oriented in the
opposite way. We assume the $N_i$'s to be pairwise disjoint. Let $N_i' \subset M$ be the
3-manifold obtained by removing the interiors of $d - d_i - 2$ disjoint PL 3-balls from
$\Int N_i$.

Each surface $\Bd N_i'$ admits a $d$-fold simple branched covering $f_i \: \Bd N_i' \to
S^2$, where $F_i$ consists of the sheets $1$ to $d_i$, $F'_i$ consists of the sheets
$d_i+1$ and $d_i+2$, while the boundaries of the removed 3-balls consists of the remaining
$d - d_i - 2 \geq 0$ sheets trivially covering $S^2$. By Corollary 6.3 in \cite{BE79},
this can be extended to a $d$-fold simple branched covering $c_i \: N_i' \to B^3$. After
having identified the base spaces of such coverings with a family of disjoint PL 3-balls
$B^3_1, \dots, B^3_k \subset S^4$, we can apply Lemma \ref{bc3bd/thm} to get a simple
covering $p \: (M; N_1', \dots, N'_k) \to (S^4; B^3_1,\dots, B^3_k)$ of degree $d$,
branched over a locally flat PL surface self-transversally immersed (embedded for $d \geq
5$) in $S^4$. Then, $p$ is the desired branched covering, since $p(F) = \Bd(\cup_i\,
B^3_i)$ is a trivial link of $k$ spheres. Moreover, by the L\"uroth-Clebsch classification
of simple branched coverings of $S^2$ (see \cite{BE79} or \cite{BC97}), the restrictions
$p_{|F_i}$ can be arbitrarily chosen, up to isotopy, with the given degrees $d_i$.

If $F_i \cong S^2$ for every $i$, we set $d_i = 1$ and at the beginning of the proof we
remove the interiors of $d-2$ balls from $N_i$ (instead of $d-3$) so that $N_i'$ has $d$
boundary components, all homeomorphic to a sphere. Then, by following the same argument,
we obtain the desired simple branched covering $p\: M \to S^4$ such that $T_k \cap B_p =
\emptyset$.
\end{proof}

\section{The proofs of the main theorems}

In this section we prove the Theorems \ref{main/thm} and \ref{main-gen/thm} stated in the
Introduction. For that we need some algebraic properties of the intersection forms of PL
4-manifolds, which are stated in the next lemmas.

\begin{lemma}\label{class1/thm}
Let $M$ be a closed connected oriented PL $4$-manifold. If $b_2^+(M) \geq 1$, there exists
a class $\phi \in H_2(M)/\Tor H_2(M)$ such that $\beta_M(\phi, \phi) = k$ for each of the
followings
$$k = \cases{
\, 4 & \text{in any case}\cr
\, 6 & \text{if $\beta_M$ is even}\cr
\, 9 & \text{if $\beta_M$ is odd}\cr
\, 5 & \text{if $\beta_M$ is odd and $b_2(M)\geq2$}.}$$
If in addition $b_2^-(M) \geq 1$, there exist two classes $\phi_1, \phi_2 \in H_2(M)/\Tor 
H_2(M)$ whose intersection matrix $\Phi = (\beta_M(\phi_i,\phi_j))$ is
$$\Phi = \pmatrix{
kn & k\,\cr
k & 0\,},$$
for every $n=0,1$ and each of the followings
$$k = \cases{
\, 4 & \text{in any case}\cr
\, 5+n & \text{if $\beta_M$ is even}\cr
\, 6-n & \text{if $\beta_M$ is odd}.}$$
\end{lemma}

\begin{proof}
We start by proving the first part, where $b_2^+(M)\geq 1$. If $\beta_M$ is odd, then it
is diagonalizable. This follows by a theorem of Donaldson for definite intersection forms
of closed oriented PL 4-manifolds \cite{Do87}, and by the Serre classification theorem of
indefinite unimodular integral forms \cite{Se73,MH73}. Hence, there exists $\delta_1 \in
H_2(M)/ \Tor H_2(M)$ such that $\beta_M(\delta_1,\delta_1) = 1$, and for $b_2(M) \geq 2$
there exists also $\delta_2 \in H_2(M)/ \Tor H_2(M)$ such that $\beta_M(\delta_1,\delta_2)
= 0$ and $\beta_M(\delta_2,\delta_2) = \pm 1$.

Otherwise, if $\beta_M$ is even, then, again by Donaldson's theorem \cite{Do87}, it is
indefinite, and so it contains a hyperbolic direct summand (see \cite{MH73}, \cite{GS99}
or \cite{Ki89}). This is a sublattice having a basis $\eta_1,\eta_2 \in H_2(M)/ \Tor
H_2(M)$, such that $\beta_M(\eta_1, \eta_1) = \beta_M(\eta_2,\eta_2) = 0$ and
$\beta_M(\eta_1,\eta_2) = 1$.

In both cases, odd and even, there exists $\phi \in H_2(M)/\Tor H_2(M)$ such that
$\beta_M(\phi, \phi) = 4$, with $\phi = 2\delta_1$ for $\beta_M$ odd, and $\phi = \eta_1 +
2\eta_2$ for $\beta_M$ even. For the remaining cases, we take: $\phi = \eta_1 + 3\eta_2$,
giving $k = 6$, if $\beta_M$ is even; $\phi = 3\delta_1$, giving $k = 9$, if $\beta_M$ is
odd; $\phi = (2 - \beta_M(\delta_2, \delta_2))\,\delta_1 + 2\delta_2$, giving $k = 5$, if
$\beta_M$ is odd and $b_2(M) \geq 2$.

Next, we prove the second part, where $b_2^+(M)\geq 1$ and $b_2^-(M) \geq 1$. If $\beta_M$
is odd, then it is diagonalizable and so there exist $\delta_1, \delta_2 \in H_2(M)/\Tor
H_2(M)$ such that $\beta_M(\delta_1, \delta_1)=1$, $\beta_M(\delta_1, \delta_2)=0$, and
$\beta_M(\delta_2,\delta_2)=-1$. Then, we get: $k = 4$ and $n = 0$, for $\phi_1 = \delta_1
+ \delta_2$ and $\phi_2 = 2(\delta_1 - \delta_2)$; $k = 4$ and $n = 1$, for $\phi_1 =
2\delta_1$ and $\phi_2 = 2(\delta_1 - \delta_2)$; $k = 6$ and $n = 0$, for $\phi_1 =
\delta_1 + \delta_2$ and $\phi_2 = 3(\delta_1 - \delta_2)$; $k = 5$ and $n = 1$, for
$\phi_1 = 3\delta_1 + 2\delta_2$ and $\phi_2 = \delta_1 - \delta_2$.

If instead $\beta_M$ is even, there exists a hyperbolic pair $\eta_1,\eta_2 \in
H_2(M)/\Tor H_2(M)$, as in the analogous case of the previous part of the proof. Then, we
get: $k = 4$ and $n = 0$, for $\phi_1 = \eta_1$ and $\phi_2 = 4\eta_2$; $k = 4$ and $n =
1$, for $\phi_1 = \eta_1 + 2\eta_2$ and $\phi_2 = 4 \eta_2$; $k = 5$ and $n = 0$, for
$\phi_1 = \eta_1$ and $\phi_2 = 5\eta_2$; $k = 6$ and $n = 1$, for $\phi_1 = \eta_1 +
3\eta_2$ and $\phi_2 = 6\eta_2$.
\end{proof}

\begin{lemma}\label{class2/thm}
Let $M$ be a closed connected oriented PL 4-manifold with $b_2(M) \geq 1$. Then, for every
non-negative integers $m \leq b_2^+(M)$ and $n \leq b_2^-(M)$ there exists a sublattice
$\Lambda_{m, n}(k) \subset (H_2(M)/\Tor H_2(M), \beta_M)$ such that
$$\Lambda_{m, n}(k)
\cong \oplus_{m}\langle k\rangle \oplus_{n}\! \langle -k\rangle,$$
for each of the followings
$$k = \cases{
\, 4 & \text{in any case}\cr
\,6 & \text{if $\beta_M$ is even}\cr
\,9 & \text{if $\beta_M$ is odd}\cr
\,5 & \text{if $\beta_M$ is odd and $b_2(M) \geq 2(m+n)$}\cr}$$
where $\langle k \rangle$ is the integral rank $1$ lattice of determinant $k$.
\end{lemma}

\begin{proof}
If $\beta_M$ is odd, arguing as in the proof of Lemma \ref{class1/thm}, we have that the
lattice $(H_2(M)/\Tor H_2(M), \beta_M)$ is isomorphic to
$$\oplus_{b_2^+(M)} \langle 1\rangle \oplus_{b_2^-(M)}\! \langle -1 \rangle.$$
Thus, $\Lambda_{m,n}(4)$ and $\Lambda_{m,n}(9)$ can be obtained by taking the doubles of
some generators in the former case, and the triples in the latter. Moreover, we can obtain
$\Lambda_{m,n}(5)$ if $b_2(M) \geq 2(m+n)$ by the same argument as in the proof of Lemma
\ref{class1/thm} applied to pairs of generators.

If $\beta_M$ is even, then the lattice $(H_2(M)/\Tor H_2(M), \beta_M)$ is isomorphic to
$\oplus_a (\pm E_8) \oplus_b\! H$ for $a=|\sigma(M)|/8$ and $b = b_2^\mp(M) \geq 1$, where
$E_8$ is the symmetric rank 8 positive definite indecomposable unimodular lattice and $H$
is the unimodular hyperbolic rank 2 integral lattice.
With respect to a suitable basis, $E_8$ can be represented by the matrix
$$A_8 = \pmatrix{
\,2 & 1 & 0 & 0 & 0 & 0 & 0 &0\,\cr
\,1 & 2 & 1 & 0 & 0 & 0 & 0 & 0\,\cr
\,0 & 1 & 2 & 1 & 0 & 0 & 0 & 0\,\cr
\,0 & 0 & 1 & 2 & 1 & 0 & 0 & 0\,\cr
\,0 & 0 & 0 & 1 & 2 & 1 & 0 & 1\,\cr
\,0 & 0 & 0 & 0 & 1 & 2 & 1 & 0\,\cr
\,0 & 0 & 0 & 0 & 0 & 1 & 2 & 0\,\cr
\,0 & 0 & 0 & 0 & 1 & 0 & 0 & 2\,}.$$
In this basis, the sublattice of $E_8$ spanned by the columns $g_1, \dots, g_8$ of the 
matrix
$$G = \pmatrix{
\,0 & 0 & 0 & 0 & 0 & 0 & 0 & -2\,\cr
\,1 & 0 & 0 & 0 & 0 & 1 & 1 & 3\,\cr
\,0 & 0 & 0 & 0 & 0 & -2 & -2 & -4\,\cr
\,0 & 1 & 0 & 0 & 1 & 2 & 3 & 5\,\cr
\,0 & 0 & 0 & 0 & -2 & -2 & -4 & -6\,\cr
\,0 & 0 & 1 & 0 & 1 & 1 & 3 & 4\,\cr
\,0 & 0 & 0 & 0 & 0 & 0 & -2 & -2\,\cr
\,0 & 0 & 0 & 1 & 1 & 1 & 2 & 3\,}$$
is isomorphic to $\oplus_8 \langle 2\rangle$.
We then obtain $\oplus_8 \langle 4\rangle \subset E_8$ as the sublattice spanned by all
vectors of the form $g_{2i-1}\pm g_{2i}$ for $i\in\{ 1,2,3,4\}$.

Moreover, we obtain $\oplus_8 \langle 6\rangle \subset E_8$ as the sublattice spanned by
all vectors of the form
$$\matrix{
g_{i+1}+g_{i+2}-g_{i+3},&\quad g_{i+1}-g_{i+2}+g_{i+4},\cr
g_{i+1}+g_{i+3}-g_{i+4},&\quad g_{i+2}+g_{i+3}+g_{i+4},}$$
for $i \in \{0,4\}$.

On the other hand, we can find sublattices $\langle k\rangle \oplus \langle
-k\rangle\subset H$, for $k = 4, 6$. Therefore, the lattice $(H_2(M)/\Tor H_2(M),
\beta_M)$ with $\beta_M$ even, contains a sublattice isomorphic to
$$\oplus_{b_2^+(M)} \langle k\rangle \oplus_{b_2^-(M)}\! \langle -k\rangle$$
for $k = 4,6$, from which we get a sublattice $\Lambda_{m,n}(k)$ for $k=4,6$.
\end{proof}

\begin{lemma}\label{hyperb/thm}
Let $M$ be a closed connected oriented PL 4-manifold. Let $n$ be an integer such that
$1\leq n \leq \min(b_2^+(M), b_2^-(M))$. Then, the lattice $(H_2(M) / \Tor H_2(M),
\beta_M)$ contains a sublattice isomorphic to $\oplus_n k H$ if $\beta_M$ is even, and
sublattice isomorphic to $\oplus_n 2k H$ if $\beta_M$ is odd, for every integer $k \geq
1$, where $H$ is the unimodular hyperbolic rank 2 integral lattice. If in addition $n <
\max(b_2^+(M), b_2^-(M))$, there is a sublattice isomorphic to $\oplus_n k H$ for every
integer $k \geq 1$ also when $\beta_M$ is odd.
\end{lemma}

\begin{proof}
If $\beta_M$ is even, then there is a sublattice of $(H_2(M)/ \Tor H_2(M), \beta_M)$ which
is isomorphic to $\oplus_n H$. Then, chosen a basis of this sublattice formed by
hyperbolic pairs $\eta_1, \eta_1', \dots, \eta_n, \eta_n'$ such that $\beta_M(\eta_i,
\eta_i) = \beta_M(\eta_i', \eta_i') = 0$ and $\beta_M(\eta_i, \eta_i') = 1$ for every $i =
1,\dots, n$, we can take the sublattice spanned by all vectors of the form $\eta_i, k
\eta_i'$, for $i = 1,\dots, n$.

If instead $\beta_M$ is odd, then the intersection form is diagonalisable, hence it
contains a sublattice isomorphic to $\oplus_n(\langle 1\rangle \oplus \langle -1\rangle)$.
Let $\{\phi_1, \phi_1',\dots, \phi_n, \phi_n'\}$ be an orthogonal basis of this sublattice
such that $\beta_M(\phi_i, \phi_i) = -\beta_M(\phi_i', \phi_i') = 1$, for every
$i=1,\dots, n$. Then, it is enough to take the sublattice spanned by all vectors of the
form $\phi_i + \phi_i',\, k(\phi_i - \phi_i')$, for $i = 1,\dots,n$.

For the last part of the statement, suppose $\beta_M$ odd and $n < \max(b_2^+(M),
b_2^-(M))$. Let $a = b_2^+(M) -n$ and $b = b_2^-(M)-n$. Then, by Serre's classification
\cite{Se73,MH73}, the intersection lattice of $M$ is isomorphic to $\oplus_n H \oplus_a\!
\langle 1\rangle \oplus_b\! \langle -1 \rangle$, since this last form is indefinite, has
the same rank and signature of $M$, and it is odd because $a$ or $b$ is non-zero. Hence,
we get a sublattice isomorphic to $\oplus_n k H$ for every integer $k \geq 1$.
\end{proof}

We are now ready to prove Theorems \ref{main/thm} and \ref{main-gen/thm}, which we state
again here below for the reader convenience.

\begin{statement}{Theorem 1.1}
Let $M$ be a closed connected oriented PL $4$-manifold. Then, there exists a branched
covering $p\: M \to N$ with:
\begin{itemize}
\item[\(a\)] $N = \CP^2$ $\Leftrightarrow$ $b_2^+(M) \geq 1;$
\item[\(b\)] $N = \CPbar^2$ $\Leftrightarrow$ $b_2^-(M) \geq 1;$
\item[\(c\)] $N = S^2 \simtimes S^2$ $\Leftrightarrow$ $b_2^+(M) \geq 1$ and $b_2^-(M) 
\geq 1;$
\item[\(d\)] $N = S^2 \times S^2$ $\Leftrightarrow$ $b_2^+(M) \geq 1$ and $b_2^-(M) \geq 
1;$
\item[\(e\)] $N = S^3 \times S^1$ $\Leftrightarrow$ $b_1(M) \geq 1.$
\end{itemize}
In all cases, we can assume that $p$ is a simple branched covering of degree $d\leq4$,
whose branch set $B_p$ is a closed locally flat PL surface self-transversally immersed in
$N$.
Moreover, $B_p$ can be desingularized to become embedded in $N$, with the following
estimates for the degree $d$: $d \leq 5$ in cases \(a\) and \(b\) for $b_2(M) \geq 2$ and
$\beta_M$ odd, case \(c\) for $\beta_M$ odd, case \(d\) for $\beta_M$ even, and case
\(e\); $d \leq 6$ in cases \(a\) and \(b\) for $b_2(M) \geq 2$ and $\beta_M$ even, case
\(c\) for $\beta_M$ even, and case \(d\) for $\beta_M$ odd; $d \leq 9$ in cases \(a\) and
\(b\) for $b_2(M) = 1$.
\end{statement}

\begin{proof}
First of all, we recall the well known fact that in a closed connected oriented PL
4-manifold $M$ any homology class $\alpha \in H_2(M)$ can be represented by a closed
oriented locally flat PL surface $F \subset M$ (see \cite{GS99} or \cite{Ki89}). Moreover,
$F$ can be easily made connected by embedded surgery. Similarly, any homology class
$\alpha \in H_3(M)$ can be represented by a closed oriented locally flat PL 3-manifold $N
\subset M$, but in this case $N$ can be made connected only if $\alpha$ is primitive (see
\cite{MP77}).

\(a\). Given any $d$-fold branched covering $p \: M \to \CP^2$, we can assume up to PL
isotopy that $B_p \subset \CP^2$ meets $\CP^1$ transversally. Then, $F = p^{-1}(\CP^1)
\subset M$ is a closed oriented locally flat PL surface, which represents a non-zero
element $\phi \in H_2(M) / \Tor H_2(M)$ such that $\beta_M(\phi,\phi) = d > 0$. Hence,
$b_2^+(M) \geq 1$.

\pagebreak

For the converse, assume that $b_2^+(M) \geq 1$. By the first part of Lemma
\ref{class1/thm}, there exists a class $\phi \in H_2(M) / \Tor H_2(M)$ such that
$\beta_M(\phi,\phi) = 4$. Then, the desired 4-fold branched covering $p \: M \to \CP^2$
can be obtained by applying Theorem \ref{bc2-cp2/thm} \(a\) in the case $d=4$ to any
closed connected oriented locally flat PL surface $F \subset M$ representing the homology
class $\phi$. In this way, the branch set $B_p$ turns out to be a closed locally flat PL
surface self-transversally immersed in $\CP^2$.

To obtain a non-singular branch surface according to the cases stated in the theorem, we
apply Theorem \ref{bc2-cp2/thm} \(a\) with $d\geq 5$ to any closed connected oriented
locally flat PL surface $F \subset M$ representing the homology class $\phi$ provided by
the corresponding cases of the first part of Lemma \ref{class1/thm} with $k=d$, taking
into account that $\beta_M$ is necessarily odd if $b_2(M) = 1$.

\(b\). This case immediately follows from case \(a\), by reversing the orientations.

\(c\) and \(d\). As in the proof of Theorem \ref{bc2-s2xs2/thm}, denote by $\xi \: E \to
S^2$ the bundle $S^2 \times S^2 \to S^2$ or $S^2 \simtimes S^2 \to S^2$, depending on the
case, and let $S^2_1\,,\,S^2_2 \subset E$ be any PL section and fiber of $\xi$,
respectively.

Given a branched $d$-fold covering $p \: M \to E$, we can assume up to PL isotopy that
$B_p \subset E$ meets both the surfaces $S^2_1$ and $S^2_2$ transversally. Then, $F_1 =
p^{-1}(S^2_1) \subset M$ and $F_2 = p^{-1}(S^2_2) \subset M$ are closed oriented locally
flat PL surfaces such that $F_1 \cdot F_2 = d > 0$ and $F_2 \cdot F_2 = 0$. It follows
that the homology class $\phi \in H_2(M)/ \Tor H_2(M)$ represented by $F_2$ is non-zero
and $\beta_M(\phi,\phi) = 0$. Therefore, $\beta_M$ is indefinite, hence $b_2^+(M) \geq 1$
and $b_2^-(M) \geq 1$.

Conversely, assuming $b_2^+(M) \geq 1$ and $b_2^-(M) \geq 1$, let $\phi_1,\phi_2 \in
H_2(M)/\Tor H_2(M)$ be the homology classes given by the second part of Lemma
\ref{class1/thm} with $n=0$ for $E = S^2 \times S^2$ or $n = 1$ for $E = S^2 \simtimes
S^2$, and $k = d$ depending on the case of the statement that we want to realize. We
represent $\phi_1$ and $\phi_2$ by closed connected oriented locally flat PL surfaces
$F_1, F_2 \subset M$, respectively, which can be assumed to be transversal to each other.
Then, we can perform an embedded surgery, without changing the homology classes of the
surfaces but increasing the genus of one of them, to eliminate each pair of opposite
intersection points (if any) between $F_1$ and $F_2$. This determines new surfaces $F_1$
and $F_2$ with $d$ transversal positive intersection points. At this point, the wanted
branched covering $p \: M \to E$ can be obtained by applying Theorem \ref{bc2-s2xs2/thm}
to $(M;F_1, F_2)$.

\(e\). Given any $d$-fold branched covering $p \: M \to S^3 \times S^1$, we can assume up
to PL isotopy that $B_p \subset S^3 \times S^1$ meets $S^3 \times \{\ast\}$ transversally
and is disjoint from $\{\ast\} \times S^1$. Then, $N = p^{-1}(S^3 \times \{\ast\}) \subset
M$ and $C = p^{-1}(\{\ast\} \times S^1) \subset M$ are closed oriented locally flat PL
submanifolds of dimensions $3$ and $1$, respectively, such that $N \cdot C = d > 0$. Then,
$C$ represents a non-trivial homology class in $H_1(M)/ \Tor H_1(M)$, and so $b_1(M) \geq
1$.

Conversely, if $b_1(M) \geq 1$, and hence $b_3(M) \geq 1$, let $N \subset M$ be a closed
connected oriented locally flat 3-manifold representing a primitive non-trivial element of
$H_3(M)$. Then $N$ does not disconnect $M$ and we can apply Theorem \ref{bc3/thm} \(b\) to
get the desired branched covering $p \: M \to S^3 \times S^1$.
\end{proof}

\begin{statement}{Theorem 1.3}
Let $M$ be a closed connected oriented PL 4-manifold and let $m$ and $n$ be non-negative
integers. Then, there exists a branched covering $p\: M \to N$ with:
\begin{itemize}
\item[\(a\)] $N = \cs_m\CP^2 \cs_n \CPbar^2$ $\Leftrightarrow$ $b_2^+(M) \geq m$ and 
$b_2^-(M) \geq n$;
\item[\(b\)] $N = \cs_n(S^2 \times S^2)$ $\Leftrightarrow$ $b_2^+(M)\geq n$ and $b_2^-(M) 
\geq n$;
\item[\(c\)] $N = \cs_n(S^3 \times S^1)$ $\Leftrightarrow$ $\pi_1(M)$ admits a free group 
of rank $n$ as a quotient.
\end{itemize}
In all cases, we can assume that $p$ is a simple branched covering of degree $d\leq4$,
whose branch set $B_p$ is a closed locally flat PL surface self-transversally immersed in
$N$. Moreover, $B_p$ can be desingularized to become embedded in $N$, with the following
estimates for the degree $d$: $d \leq 5$ in case \(a\) for $b_2(M) \geq 2(m+n)$ and
$\beta_M$ odd, case \(b\) for $\beta_M$ even, and case \(c\); $d \leq 6$ in case \(a\) for
$b_2(M) \geq 2(m+n)$ and $\beta_M$ even, and case \(b\) for $\beta_M$ odd; $d \leq 9$ in
case \(a\) for $b_2(M) < 2(m+n)$.
\end{statement}

\begin{proof}
We only sketch the proof, because it follows the same ideas of the proof of Theorem
\ref{main/thm}. For items \(a\) and \(b\) the implications to the right are
straightforward, so we only discuss the implications to the left.
	
\(a\). We consider the proper sublattice $\Lambda_{m,n}(d) \subset H_2(M)/ \Tor H_2(M)$
given by Lemma \ref{class2/thm}, according to the particular case of item \(a\) that we
want to prove, and represent the base of $\Lambda_{m,n}(d)$ by disjoint embedded oriented
connected locally flat PL surfaces $F_1, \dots, F_{m+n} \subset M$. We also consider
$\CP_1^1, \dots, \CP^1_{m+n} \subset N$, where $\CP^1_i$ is a projective line in the
$i$-th connected summand of $N = \cs_m\CP^2 \cs_n \CPbar^2$.

Next, we construct $d$-fold simple branched coverings $t_i \: (T_{F_i}; F_i) \to
(T_{\CP^1_i}; \CP^1_{i})$ between tubular neighborhoods, based on Proposition
\ref{bundle/thm} as in the proof of Theorem \ref{bc2-cp2/thm}, whose restrictions on the
boundary are ribbon fillable by Proposition \ref{bundleS/thm}. Now, we put
$$t = \cup_i t_i \: {\cup_i} (T_{F_i}; F_i) \to \cup_i (T_{\CP^1_i}; \CP^1_{i}),$$
$$W = \Cl(M - \cup_i T_{F_i}),$$
$$Y = \Cl(N - \cup_i T_{\CP^1_i}) \cong \cs_{m+n} B^4 \cong S^4 - \Int(B^4_1 \cup\dots 
\cup B^4_{m+n}).$$
\medskip
Then, we extend the ribbon fillable branched covering $t_{|\Bd} \: \Bd W \to \Bd Y \cong
\cup_{m+n} S^3$ to a simple branched covering $q \: W \to Y$ by means of Theorem
\ref{bc-ext/thm}, and finally we obtain the wanted branched covering by putting $p = q
\cup t \: M \to N$.

\(b\). By Lemma \ref{hyperb/thm}, we can find a sublattice of $(H_2(M) / \Tor H_2(M),
\beta_M)$ which is isomorphic to $\oplus_n d H$, where $d$ can be chosen according to the
specific case that we want to obtain. For each direct summand $d H$, we choose a basis
$\eta_i, \eta_i'$ of it such that $\beta_M(\eta_i, \eta_i) =\beta_M(\eta_i', \eta_i') = 0$
and $\beta_M(\eta_i, \eta_i') = d$, for $i = 1,\dots, n$.

Such homology classes can be represented by pairwise transversal closed connected oriented
locally flat PL surfaces $F_i, F_i'\subset M$ such that their geometric intersections
equal the algebraic ones, for $i = 1,\dots, n$.

Then, we can find a simple $d$-fold branched covering as desired by repeating the argument
used in case \(a\) (see also the proof of case \(d\) of Theorem \ref{main/thm}), with the
following setting
$$t = \cup_i t_i \: {\cup_i} (T_{F_i} \cup T_{F_i'}; F_i, F'_i) \to \cup_i 
\big(T_{S^2_{1i}} \cup T_{S^2_{2i}}; S^2_{1i}, S^2_{2i}\big),$$
$$W = \Cl\big(M - \cup_i (T_{F_i} \cup T_{F_i'})\big),$$
$$Y = \Cl\big(N - \cup_i (T_{S^2_{1i}} \cup T_{S^2_{2i}})\big) \cong \cs_{n} B^4 \cong S^4 
- \Int(B^4_1 \cup\dots \cup B^4_{n}).$$
\medskip
\(c\). Suppose that there is a $d$-fold branched covering $p \: M \to N=\cs_n(S^3 \times
S^1)$ for some $d \geq 1$. Let $\gamma_1, \dots, \gamma_n \in \pi_1(N) \cong \F_n$ be the
free generators, where $\F_n$ is the free group of rank $n$. By lifting loops, we can find
elements $\tilde\gamma_1,\dots, \tilde\gamma_n \in \pi_1(M)$ such that
$p_*(\tilde\gamma_i) = \gamma_i^{a_i}$ for certain $a_i \in\{1, \dots, d\}$ and for all $i
= 1,\dots, n$, where $p_*\: \pi_1(M) \to \pi_1(N)$ is the homomorphism induced by $p$.
Then, $p_*(\pi_1(M))$ contains the subgroup $\langle \gamma_1^{a_1}, \dots, \gamma_n^{a_n}
\rangle$ of $\F_n$. It follows that $p_*(\pi_1(M))$ is free of rank at least $n$, implying
that it admits $\F_n$ as a quotient.

For the converse, we observe that for every epimorphism $\phi \: \pi_1(M) \to \F_n$ there
exists a PL embedding $h \: {\vee_{\!n} S^1} \to M$ such that $h_*\: \pi_1({\vee_{\!n}
S^1}) \to \pi_1(M)$ is a right inverse of $\phi$. We want to define a PL map $g \: M \to
{\vee_{\!n} S^1}$, which is a left inverse of $h$, and such that $g_* = \phi \: \pi_1(M)
\to \pi_1({\vee_{\!n} S^1}) \cong \F_n$. To define $g$, we consider a handlebody
decomposition of $M$ with only one 0-handle $H^0$ centered at $h(*)$, where $*$ is the
join point of ${\vee_{\!n} S^1}$, and such that $H^0 \cup H^1_1 \cup \dots \cup H^1_n$ is
a regular neighborhood of $h({\vee_{\!n} S^1})$, for some 1-handles $H^1_1, \dots,
H^1_n$.\break At this point, the construction of $g$ is as follows: over $H^0 \cup H^1_1
\cup \dots \cup H^1_n$, the map $g$ is a PL collapsing retraction over $h({\vee_{\!n}
S^1})$ composed with $h^{-1} \: h({\vee_{\!n} S^1}) \to {\vee_{\!n} S^1}$; over the
remaining 1-handles it is defined according to $\phi$; then $g$ can be extended over the
2-handles, thanks to the compatibility with $\phi$ over the generators of $\pi_1(M)$;
finally, there is no obstruction to further extend $g$ over the higher index handles.

For every $i = 1\dots,n$, let $y_i$ be a point in the $i$-th component of $\vee_{\!n} S^1
- \{*\}$, over which $g$ is transversal ($y_i$ is a regular value), and let $Y_i$ be the
connected component of $g^{-1}(y_i)$ that contains $h(y_i)$. Then, $Y_i$ is a connected
orientable locally flat PL 3-manifold in $M$.

Let $M'$ be $M$ cut open along $Y_1, \dots, Y_n$. By construction, $M'$ is a connected
4-manifold with $2n$ boundary components $Y_1, \overline Y_1, \dots, Y_n, \overline Y_n$
and there are identifications $Y_i \cong \overline Y_i$ coming from the cuts. By Theorem
\ref{bc-ext/thm} there exists a simple $d$-fold branched covering $q \: M' \to S^4 -
\cup_{i=1}^n \Int(B^4_i \cup \Bbar^4_i)$ such that the coverings $q_{|Y_i} \: Y_i \to \Bd
B_i$ and $q_{|\overline Y_i} \: \overline Y_i \to \Bd \Bbar_i^4$ match with respect to the
above identifications, where $B^4_i$ and $\Bbar^4_i$ are disjoint 4-balls in $S^4$, for
$i=1,\dots, n$. We can assume that $B_q$ is a locally flat self-transversally immersed
compact PL surface if $d \geq 4$, and that it is embedded if $d \geq 5$. Then, we can glue
back $Y_i$ with $\overline Y_i$, as well as $\Bd B^4_i$ with $\Bd \Bbar^4_i$, by means of
the identifications needed to reconstruct $M$ and $\cs_n(S^3 \times S^1)$ respectively.
Then we get a simple branched covering $p \: M \to \cs_n (S^3 \times S^1)$ as desired.
\end{proof}

\section{Final remarks}

In Theorem \ref{main/thm} \(a\), the simple branched covering $p \: M \to \CP^2$ can be 
constructed such that $p^*(w_2(\CP^2)) = w_2(M)$ if $w_2(M)^2 \neq 0$ in $H^4(M;\Z_2)$. 
Indeed, in the proof it is enough to take as $\phi \in H_2(M)/ \Tor H_2(M)$ the Poincar\'e 
dual (modulo $\Tor H_2(M)$) of any integral lift of $w_2(M)$ with positive (odd) square. 
An analogous fact holds for Theorem \ref{main/thm} \(b\).

In Theorem \ref{main-gen/thm} \(b\), for $\beta_M$ odd and $n < \max(b_2^+(M), b_2^-(M))$,
we can also obtain a 5-fold simple covering $p \: M \to \cs_n(S^2 \times S^2)$ branched
over a non-singular PL surface, by using the last part of Lemma \ref{hyperb/thm} and
taking $d = 5$ in the proof.

In Theorem \ref{bc3/thm} we can take $p$ such that its restriction $p_{|N}$ coincides with
any given ribbon fillable $d$-fold branched covering $c \: N \to S^3$. Indeed, in the
proof the choice of $c$ as such a covering is arbitrary.

The following Corollary to Theorem \ref{bc3/thm} is immediate but possibly interesting for
the PL or smooth Schoenflies Conjecture in $S^4$.

\begin{corollary}\label{bc3s/thm} 
Let $\Sigma^3 \subset S^4$ be a PL embedded $3$-sphere and let $d \geq 4$. Then, there
exists a $d$-fold simple covering $p \: (S^4; \Sigma^3) \to (S^4; S^3)$ branched over a
locally flat PL self-transversally immersed surface, which can be taken embedded for $d
\geq 5$. Moreover, the restriction $p_{|\Sigma^3} \: \Sigma^3 \to S^3$ can be arbitrarily
chosen among $d$-fold ribbon fillable branched coverings.
\end{corollary}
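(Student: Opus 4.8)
The plan is to apply Theorem \ref{bc3/thm}(a) directly, taking $M = S^4$ and $N = \Sigma^3$. The only hypothesis that needs checking is that $\Sigma^3$ disconnects $S^4$, and this follows at once from Alexander duality: $\tilde H_0(S^4 - \Sigma^3) \cong \tilde H^3(\Sigma^3) \cong \tilde H^3(S^3) \cong \Z$, so $S^4 - \Sigma^3$ has exactly two connected components. In particular we make no claim that either complementary piece is a ball — that would be the PL Schoenflies conjecture in dimension $4$ — but disconnection is all that Theorem \ref{bc3/thm}(a) requires. Hence its hypothesis is satisfied.

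Applying Theorem \ref{bc3/thm}(a) then produces, for every $d \geq 4$, a simple $d$-fold branched covering $p \: (S^4; \Sigma^3) \to (S^4; S^3)$ with $\Sigma^3 = p^{-1}(S^3)$ and branch set $B_p \subset S^4$ a closed locally flat PL surface self-transversally immersed in $S^4$, which can be taken embedded when $d \geq 5$. This is precisely the first assertion of the corollary.

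For the final clause, I would invoke the remark recorded just above in this section: the covering furnished by the proof of Theorem \ref{bc3/thm} is built by extending an \emph{arbitrarily chosen} ribbon fillable $d$-fold branched covering $c \: N \to S^3$ — the input of the construction, which can be obtained from any integral surgery presentation of $N$ — and that construction yields $p_{|N} = c$. With $N = \Sigma^3$, this shows that $p_{|\Sigma^3}$ may be prescribed to be any ribbon fillable $d$-fold branched covering $\Sigma^3 \to S^3$, which completes the proof.

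I expect no genuine obstacle here: all the substantive content is already contained in Theorem \ref{bc3/thm} (and, through it, in Theorem \ref{bc-ext/thm}), and the one point requiring a word of justification — that $\Sigma^3$ separates $S^4$ — is the elementary homological fact above, which is independent of the Schoenflies conjecture.
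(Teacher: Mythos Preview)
Your proposal is correct and matches the paper's approach exactly: the paper states the corollary as ``immediate'' from Theorem \ref{bc3/thm}, having just recorded the remark that $p_{|N}$ can be taken to be any prescribed ribbon fillable $d$-fold covering. Your Alexander duality check that $\Sigma^3$ separates $S^4$ is the only detail the paper leaves implicit, and you have filled it in appropriately.
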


Moreover, for a PL 3-manifold $N \subset M$, one can prove that there is a simple branched
covering $p \: (M; N) \to (S^4; S^3)$ even though $N$ does not disconnect $M$. In this
case, we obtain an arbitrary degree $d \geq 6$ and a locally flat PL embedded branch
surface. The proof goes as follows: following the proof of Theorem \ref{bc3/thm} \(b\), we
begin with a ribbon fillable branched covering $c \: \Bd M_1 \to S^3$ of degree $d \geq
6$, with $M_1$ a collar of $N$ in $M$. This is possible because $\Bd M_1$ has two
connected components homeomorphic to $N$. Then, by Theorem \ref{bc-ext/thm}, there are two
extensions of $c$ as simple $d$-fold branched coverings $p_1 \: M_1 \to S^4_-$ and $p_2 \:
M_2 \to S^4_+$, both branched over a locally flat properly embedded PL surface. Their
union provides the desired branched covering $p \: (M; N) \to (S^4; S^3)$.

In Theorem \ref{bc2/thm}, for $k \geq 2$, we can take $S^2 \subset S^4$ instead of $T_k$,
with $d = 4k$. Thus, there exists a simple branched covering $p \: (M; F) \to (S^4; S^2)$
even though $F$ is not connected. The proof is essentially the same, the only difference
consisting in the identification of the base of $c_i \: N_i \to B^3$ with a single copy of
$B^3 \subset S^4$ instead of $k$ copies of it.

The singularities of the branch surfaces of all the 4-dimensional simple branched
coverings we have constructed, namely the transversal self-intersections, originate from
the application of Theorem \ref{bc-ext/thm}, which was proved in \cite{PZ16}. In the
construction therein, such singularities appear in pairs, so one can investigate to what
extent they can be eliminated, without increasing the covering degree. Then, we conclude
by asking the following question (cf. Problem 4.113 (A) in Kirby's list \cite{Ki95}).

\begin{question}
Can the simple branched covering $p \: M \to N$ in Theorem \ref{main/thm} be always chosen 
with a locally flat PL embedded branch surface even for $d = 4$?
\end{question}

\thebibliography{00}

\bibitem{AP08}
A. Akhmedov and B.D. Park,
\emph{Exotic smooth structures on small 4-manifolds}, 
Invent. Math. {\bf 173} (2008), 209--223.

\bibitem{AP10}
A. Akhmedov and B.D. Park,
\emph{Exotic smooth structures on small 4-manifolds with odd signatures},
Invent. Math. {\bf 181} (2010), 577--603.

\bibitem{Au00} D. Auroux,
\emph{Symplectic $4$-manifolds as branched coverings of $\CP^2$},
Invent. Math. {\bf 139} (2000), 551--602.

\bibitem{BC97}
I. Bauer and F. Catanese,
\emph{Generic lemniscates of algebraic functions},
Math. Ann. {\bf 307} (1997), 417--444.

\bibitem{BE79} I. Berstein and A.L. Edmonds,
\emph{On the construction of branched coverings of low-dimensional manifolds},
Trans. Amer. Math. Soc. {\bf 247} (1979), 87--124.

\bibitem{BP05} I. Bobtcheva and R. Piergallini,
\emph{Covering moves and Kirby calculus},
preprint 2005, arXiv:math/0407032.

\bibitem{BP12} I. Bobtcheva and R. Piergallini,
\emph{On $4$-dimensional $2$-handlebodies and  $3$-mani\-folds},
J. Knot Theory Ramifications {\bf 21} (2012), 1250110 (230 pages).

\bibitem{BH01}
M. Bonk and J. Heinonen,
\emph{Quasiregular mappings and cohomology},
Acta Math. {\bf 186} (2001), no. 2, 219--238.

\bibitem{CF11}
C. Ciliberto and F. Flamini, \emph{On the branch curve of a general projection of a
surface to a plane}, Trans. Amer. Math. Soc. {\bf 363} (2011), no. 7, 3457--3471.

\bibitem{Do87-1}
S.K. Donaldson,
\emph{Irrationality and the h-cobordism conjecture},
J. Differential Geom. {\bf 26} (1987), 141--168.

\bibitem{Do87} S.K. Donaldson,
\emph{The orientation of Yang-Mills moduli spaces and 4-manifold topology},
J. Diff. Geom. {\bf 26} (1987), 397--428.

\bibitem{GS99} R.E. Gompf and A.I. Stipsicz,
\emph{$4$-manifolds and Kirby calculus},
Grad. Studies in Math. {\bf 20}, Amer. Math. Soc. 1999.

\bibitem{Gr81}
M. Gromov,
\emph{Hyperbolic manifolds, groups and actions},
Riemann surfaces and related topics: Proceedings of the 1978 Stony Brook Conference (State
Univ. New York, Stony Brook, N.Y., 1978), pp. 183--213,
Ann. of Math. Stud. {\bf 97}, Princeton Univ. Press 1981.

\bibitem{Gr07}
M. Gromov,
\emph{Metric structures for Riemannian and non-Riemannian spaces},
based on the 1981 French original, Progress in Mathematics {\bf 152}, Birkh\"auser 2007.

\bibitem{IP02}
M. Iori and R. Piergallini,
\emph{$4$-manifolds as covers of $S^4$ branched over non-singular surfaces},
Geometry \& Topology {\bf 6} (2002), 393--401.

\bibitem{Ki89}
R. Kirby,
\emph{The topology of $4$-manifolds},
Lecture Notes in Mathematics {\bf 1374}, Springer-Verlag 1989.

\bibitem{Ki95} 
R. Kirby,
\emph{Problems in low-dimensional topology},
Geometric topology, Proceedings of the 1993 Georgia International Topology Conference,
AMS/IP Studies in Advanced Mathematics, American Mathematical Society 1997, 35--473.
Available at https://math.berkeley.edu/$\sim$kirby.

\bibitem{Li62}
W.B.R. Lickorish, \emph{A representation of orientable combinatorial 3-manifolds}, Ann. of 
Math. {\bf 76} (1962), 531--540.

\bibitem{MP77}
W.H. Meeks III and J. Patrusky,
\emph{Representing codimension-one homology classes by embedded submanifolds},
Pacific J. Math. {\bf 68} (1977), 175--176.

\bibitem{MH73} 
J. Milnor and D. Husemoller,
\emph{Symmetric bilinear forms},
Ergebnisse der Mathematik und ihrer Grenzgebiete {\bf 73}, Springer-Verlag 1973. 

\bibitem{Mo78}
J.M. Montesinos,
\emph{$4$-manifolds, $3$-fold covering spaces and ribbons},
Trans. Amer. Math. Soc. {\bf 245} (1978), 453--467.

\bibitem{MP98} 
M. Mulazzani and R. Piergallini,
\emph{Representing links in 3-manifolds by branched coverings of $S^3$},
Manuscripta Math. {\bf 97} (1998), 1--14.

\bibitem{PSS05}
J. Park, A.I. Stipsicz and Z. Szab\'o,
\emph{Exotic smooth structures on $\CP^2 \cs 5 \CPbar^2$}, 
Math. Res. Lett. {\bf 12} (2005), 701--712.

\bibitem{Pi95} R. Piergallini,
\emph{Four-manifolds as $4$-fold branched covers of $S^4$},
Topology {\bf 34} (1995), 497--508.

\bibitem{PZ16}
R. Piergallini and D. Zuddas, 
\emph{On branched covering representation of 4-mani\-folds}, 
J. London Math. Soc. {\bf 100} (2019), 1--16.

\bibitem{Pr19}
E. Prywes,
\emph{A bound on the cohomology of quasiregularly elliptic manifolds},
Ann. of Math. {\bf 189} (2019), 863--883.

\bibitem{Ri06}
S. Rickman,
\emph{Simply connected quasiregularly elliptic 4-manifolds},
Ann. Acad. Sci. Fenn. Math. {\bf 31} (2006), no. 1, 97--110. 

\bibitem{Se73}
J.P. Serre,
\emph{A course in arithmetic}, Graduate Texts in Mathematics {\bf 7}, Springer-Verlag 
1973.

\bibitem{W60}
A.H. Wallace, \emph{Modifications and cobounding manifolds}, Can. J. Math. {\bf 12} 
(1960), 503--528.
\endthebibliography

\end{document}